\newtheorem{dummy}{Dummy}
\newtheorem{lemma}[dummy]{Lemma}
\newtheorem{theorem}[dummy]{Theorem}
\theoremstyle{definition}
\newtheorem{definition}{Definition}
\newtheorem{example}[dummy]{Example}
\newtheorem{remark}[dummy]{Remark}
\newcommand{\tl}{\widetilde{\tau}}
\newcommand{\ignore}[1]{}
\author{S. Pumpl\"un}
\email{susanne.pumpluen@nottingham.ac.uk}
\address{School of Mathematical Sciences\\
University of Nottingham\\
University Park\\
Nottingham NG7 2RD\\
United Kingdom
}
\keywords{Space-time block code, linear code, nonassociative algebra, coset code, wiretap code, skew polynomial ring}
\subjclass[2010]{Primary: 17A35; Secondary: 11T71, 94B40, 94B05}
\begin{document}

\title[Quotients of orders]
{Quotients of orders in algebras obtained from skew polynomials with applications to coding theory}

\maketitle

\begin{abstract}
 We describe families of nonassociative finite unital rings that occur as quotients of natural nonassociative orders in
generalized nonassociative cyclic division algebras over number fields.
These natural orders have already been  used to systematically construct fully diverse
fast-decodable space-time block codes. We show how the quotients of natural orders can be
 employed for coset coding.
 Previous results by Oggier and Sethuraman involving quotients of  orders in associative cyclic division algebras
  are obtained as special cases.
\end{abstract}

%
%

\section*{Introduction}

Let $S$ be a unital ring, $\sigma$ an injective endomorphism of $S$ and $\delta$ a left $\sigma$-derivation of $S$.
Take a monic skew polynomial $f\in R=S[t;\sigma,\delta]$ of degree $m$ greater than one.
Then the additive subgroup
$\{h\in S[t;\sigma,\delta]\,|\, {\rm deg}(h)<m \}$ of $R=S[t;\sigma,\delta]$ becomes a nonassociative unital  ring
via the multiplication $g\circ h=gh \,\,{\rm mod}_r f $, using right division by $f$  \cite{P66}, \cite{P15}.

This nonassociative ring is denoted by $S_f=R/Rf$ and is an algebra
over the subring $S_0=\{a\in S\,|\, ah=ha \text{ for all } h\in S_f\}$ of $S$. If $f$ is
an invariant skew polynomial,  meaning $Rf$ is a two-sided ideal, we obtain the usual associative
 quotient ring $R/Rf$.  We call $S_f$ a \emph{Petit algebra}, as the construction goes back to Petit \cite{P66}.

Adapting the approach from \cite{DO} and \cite{OS}, we  work with monic skew polynomials $f$ with coefficients either
 in the rings of integers $\mathcal{O}_K$ of
 a number field $K$, or in a natural $\mathcal{O}_F$-order
 of a cyclic division algebra over a number field $F$. We  define a natural nonassociative
order $\Lambda$ in $S_f$ and investigate the  nonassociative quotient rings
 of $\Lambda$ by a two-sided ideal $\mathcal{I}$ in $\Lambda$.
These quotient rings are isomorphic to the direct sum of
 Petit algebras over a finite ring. We thus generate a large class of finite nonassociative rings
 which can be viewed as quotient rings of  natural orders $\Lambda$,
 and which includes the  class of associative rings described in \cite{OS} as a special case.
 We put a strong emphasis on algebras obtained for $\delta=0$ and $f=t^m-d$, since
 these algebras are behind the design of recent fast-decodable
fully diverse  space-time block codes (cf. \cite{SP14}, \cite{R13}, \cite{MO13}, see \cite{PS15}).
Moreover, due to the connection between the algebras $S_f$ and cyclic $(f,\sigma,\delta)$-codes  \cite{BL}, these particular algebras
define $\sigma$-cyclic codes if $f$ is reducible.

 The finite nonassociative rings we obtain can be employed
for the coset encoding of space-time block codes, analogously as described in \cite[Section 8]{OS} for the associative case, but also
for the coset encoding of linear $(f,\sigma,\delta)$-codes as described in \cite[Section 5.2, 5.3]{DO}
and \cite{P16.1}.

In this paper, we will focus on the coset encoding of space-time block codes. Space-time block codes are
used for reliable high rate transmission
over wireless digital channels with multiple antennas transmitting and receiving the data.
A \emph{space-time block code} (STBC) is a set $\mathcal{C}$ of complex $n\times m$ matrices
 that satisfies a number of properties which determine how well the code performs.
We consider a model representing slow multiple antenna fading channels, which means that the channel is constant over $nL$ channel
uses, so the code contains $n\times nL$ codewords of the type $X=(X_1,\dots,X_L)$, with the $X_i$
matrices in some $\mathcal{C}$.

We proceed as follows:
In Section \ref{sec:1} we collect the terminology and results needed later
and explain how to get coset codes using
quotients of natural orders in generalized nonassociative cyclic division algebras. In Sections
\ref{sec:naturalI} and \ref{sec:naturalII} we define natural orders in certain $S_f$ and look at their quotients.
We recall how fully diverse space-time codes are obtained from an order in a nonassociative division algebra $S_f$
over a number field and then give examples of coset codes.

The different structures of the quotients of a natural order are then investigated in Section \ref{sec:structure}.
We put particular emphasis on generalized nonassociative  cyclic division algebras and their natural orders, because of their role
in designing both linear $\sigma$-constacyclic codes and in building fast-decodable space-time block codes.
 We prove that we can restrict ourselves to the case when the two-sided ideal that is factored out has the form
 $\mathfrak{q}^s\Lambda$, where $\mathfrak{q}$ is a prime ideal in a suitable subring of integers,
and then again limit our investigation to generalized nonassociative cyclic algebras.
The lower bound  for the determinant of a sum of positive-definite
matrices given in (\ref{equ:key}) needed for coding gain estimates and established in \cite{OS} holds analogously
in our setting.
Sections \ref{sec:inertial} and \ref{sec:inertialII} look at different choices for the prime ideal
$\mathfrak{q}$, with Section \ref{sec:inertial}
focusing  on the case when $\mathfrak{q}$ is an inertial ideal.

We do not strive for completeness and refrain from investigating all possible cases
 of nonassociative finite rings which can be obtained as quotients of natural orders.
 It is clear how to proceed after seeing the selected cases highlighted here.
Potential future applications to coding theory are briefly considered in Section \ref{sec:last}.

%
%

\section{Preliminaries}\label{sec:1}

\subsection{Nonassociative algebras}
Let $R$ be a unital commutative ring and let $A$ be an
$R$-module.
We call $A$ an \emph{algebra} over $R$ if there exists an
$R$-bilinear map $A\times A\to A$, $(x,y) \mapsto x \cdot y$, denoted simply by juxtaposition $xy$,
the  \emph{multiplication} of $A$.
An algebra $A$ is called \emph{unital} if there is
an element in $A$, denoted by 1, such that $1x=x1=x$ for all $x\in A$.
We will only consider unital algebras.

For an $R$-algebra $A$, associativity in $A$ is measured by the {\it associator} $[x, y, z] = (xy) z - x (yz)$.
The {\it left nucleus} of $A$ is defined as ${\rm Nuc}_l(A) = \{ x \in A \, \vert \, [x, A, A]  = 0 \}$, the
{\it middle nucleus}  as ${\rm Nuc}_m(A) = \{ x \in A \, \vert \, [A, x, A]  = 0 \}$ and  the
{\it right nucleus}  as ${\rm Nuc}_r(A) = \{ x \in A \, \vert \, [A,A, x]  = 0 \}$.
Their intersection
 ${\rm Nuc}(A) = \{ x \in A \, \vert \, [x, A, A] = [A, x, A] = [A,A, x] = 0 \}$ is the {\it nucleus} of $A$.
${\rm Nuc}_l(A)$, ${\rm Nuc}_m(A)$ and ${\rm Nuc}_r(A)$ are associative
subalgebras of $A$ containing $R1$.
The  {\it commuter} of $A$ is defined as ${\rm Comm}(A)=\{x\in A\,|\,xy=yx \text{ for all }y\in A\}$ and
the {\it center} of $A$ is ${\rm C}(A)=\text{Nuc}(A)\cap  {\rm Comm}(A)$ \cite{Sch}.

Let $R$ be a Noetherian integral domain with quotient field $F$ and $A$ a finite-dimensional unital $F$-algebra.
Then an $R$-\emph{lattice} in $A$ is a finitely generated submodule $\Gamma$ of $A$ which contains an $F$-basis of $A$.
An $R$-\emph{order} $\Gamma$ in $A$ is a multiplicatively closed $R$-lattice containing $1_A$
(note that the multiplication need not be associative).
An $R$-order will be called \emph{maximal} if $\Gamma'\subset\Gamma$ implies $\Gamma'=\Gamma$ for every $R$-order
$\Gamma'$ in $A$.

   A non-trivial algebra $A$ over a field $F$ is called a {\it division algebra} if for any $a\in A$, $a\not=0$,
the left multiplication  with $a$, $L_a(x)=ax$,  and the right multiplication with $a$, $R_a(x)=xa$, are bijective maps.
Any division algebra is simple, that means has only trivial two-sided ideals.
A finite-dimensional algebra
$A$ is a division algebra over $F$ if and only if $A$ has no zero divisors.


\subsection{Skew polynomial rings}


Let $S$ be a unital associative ring, $\sigma$ an injective ring endomorphism of $S$ and
$\delta:S\rightarrow S$ a \emph{left $\sigma$-derivation}, i.e.
an additive map such that
$$\delta(ab)=\sigma(a)\delta(b)+\delta(a)b$$
for all $a,b\in S$, implying $\delta(1)=0$. The \emph{skew polynomial ring} $R=S[t;\sigma,\delta]$
is the set of skew polynomials $\sum_{i=0}^{n}a_it^i$
with $a_i\in S$, where addition is defined term-wise and multiplication by
$$ta=\sigma(a)t+\delta(a) \quad (a\in S).$$
Define ${\rm Fix}(\sigma)=\{a\in S\,|\,
\sigma(a)=a\}$ and ${\rm Const}(\delta)=\{a\in S\,|\, \delta(a)=0\}$ and
put $S[t;\sigma]=S[t;\sigma,0]$ and $S[t;\delta]=S[t;id,\delta]$.

 For $f=\sum_{i=0}^{n}a_it^i$ with $a_n\not=0$ define the \emph{degree} of $f$ as ${\rm deg}(f)=n$ and ${\rm deg}(0)=-\infty$.
Then ${\rm deg}(fg)\leq{\rm deg} (f)+{\rm deg}(g)$ (with equality if $f$ or $g$ has an invertible leading coefficient,
if $S$ is a domain  or if $S$ is a division ring).
 A skew polynomial $f\in R$ is \emph{irreducible} in $R$ if it is no unit and  it has no proper factors, i.e if there do not exist $g,h\in R$ with
 ${\rm deg}(g),{\rm deg} (h)<{\rm deg}(f)$ such
 that $f=gh$.  $f\in R$ is a \emph{(right)-invariant} (also called \emph{two-sided}) skew polynomial
if $fR\subset Rf$. In the following, we drop the right when we talk about invariant polynomials.
  If $f$ is invariant then $Rf$ is a two-sided ideal in $R$ and conversely, every two-sided ideal in $R$ is generated by
   an invariant polynomial.


\subsection{The algebras $S_f$}\label{sec:S_f}


 Let $R=S[t;\sigma,\delta]$, $\sigma$ be injective and $\delta$ a left $\sigma$-derivation.
 Suppose that $f=\sum_{i=0}^{m}d_it^i\in R$ has an invertible leading coefficient $d_m$.
Then for all $g\in R$  there exist  uniquely determined $r,q\in R$ with ${\rm deg}(r)<{\rm deg}(f)$, such that
$$g(t)=q(t)f(t)+r(t).$$
Let ${\rm mod}_r f$ denote the remainder of right division by such an $f$.
  Let  $R_m=\{g\in R\,|\, {\rm deg}(g)<m\}.$ Then the additive group $R_m$  together with the multiplication
$$g\circ h= gh \,\,{\rm mod}_r f $$
becomes a  unital nonassociative ring $S_f=(R_m,\circ)$ also denoted by $R/Rf$ \cite{P15}.
$S_f$ is a unital nonassociative algebra  over $S_0=\{a\in S\,|\, ah=ha \text{ for all } h\in S_f\}$.
This construction was  introduced by Petit \cite{P66} for unital division rings $S$.
We call $S_f$ a \emph{Petit algebra}.
 $S_f$ is associative if and only if  $Rf$ is a two-sided ideal in $R$.
We will only consider monic $f$, since monic $f$ are the ones used in code constructions.
Moreover, $S_f=S_{af}$ for all invertible $a\in S$.

  If $S_f$ is not associative then $S\subset {\rm Nuc}_l(S_f)$ and $S\subset{\rm Nuc}_m(S_f)$,
${\rm Nuc}_r(S_f)=\{g\in R\,|\, fg\in Rf\}$ and $S_0$ is the center of $S_f$.
It is easy to see that
$$C(S)\cap {\rm Fix}(\sigma)\cap {\rm Const}(\delta)\subset S_0.$$
 Right multiplication with $0\not=h\in S_f$,
$R_h:S_f\longrightarrow S_f,$ $p\mapsto ph$, is an $S$-module endomorphism \cite{P66}.
By expressing the map $R_h$ in matrix form
with respect to the $S$-basis $1,t,\dots, t^{m-1}$ of $S_f$, the map
$$\gamma: S_f \to {\rm End}_K(S_f), h\mapsto R_h$$
induces an injective $S$-linear map
$$\gamma: S_f \to {\rm Mat}_m(S), h\mapsto R_h \mapsto Y.$$
This special characteristic of $S_f$ is exploited when designing  space-time block codes.
It uses the fact that $S\subset {\rm Nuc}_l(S_f)$ and $S\subset{\rm Nuc}_m(S_f)$.

 If $S$ is a division algebra and $S_f$ is a finite-dimensional vector space over $S_0$,
  then $S_f$ is a division algebra if and only if $f$ is irreducible in $R$ \cite[(9)]{P66}.

For $f=\sum_{i=0}^{m}d_it^i\in S[t;\sigma]$, $t$ is left-invertible if and only if $d_0$ is invertible
 by a simple degree argument. Thus if $f$ is irreducible (hence $d_0\not=0$) and $S$ a division ring then
$t$ is always left-invertible and $S_0={\rm Fix}(\sigma)\cap C(S)$, which also is the center of $S_f$
\cite{P15}.

We highlight  two special cases that are particularly relevant for our coding applications later:

\begin{definition}
(i) Let $S/S_0$ be an extension of commutative unital rings and $\sigma$ an automorphism of $S$ of order $m$
such that $S_0\subset{\rm Fix}(\sigma)$. For any $c\in S$ (it may even be zero),
$$S_f=S[t;\sigma]/S[t;\sigma] (t^m-c)$$
is called a \emph{nonassociative cyclic algebra}  $(S/S_0,\sigma,c)$ \emph{of degree $m$}.
For $c\in S_0$, this is an associative cyclic algebra, cf. \cite{DO}, \cite{OS}.
For $c \in S \setminus S_0$,  $(S/S_0,\sigma,c)$ has nucleus containing
$S$  and center containing $S_0$. Over fields, these algebras were studied for instance in
\cite{S} or \cite{S12}.
\\ (ii) Let $D$ be a finite-dimensional central division algebra
 over  $F={\rm Cent}(D)$ of degree $n$ and $\sigma\in {\rm Aut}(D)$ such that
$\sigma|_{F}$ has finite order $m$.
A \emph{(generalized) nonassociative cyclic algebra of degree $m$}  is an algebra
$S_f=D[t;\sigma]/D[t;\sigma]f$ over $F_0=F\cap {\rm Fix}(\sigma)$
with $f=t^m-d\in D[t;\sigma]$.  We denote this algebra by $(D,\sigma, d)$.
\end{definition}

\begin{example}\label{ex:gencyclic}
Let $F$ and $L$ be fields, $F_0=F\cap L$, and let $K$ be a cyclic field extension of both $F$ and $L$ such that
${\rm Gal}(K/F) = \langle \rho\rangle$ and $[K:F] = n$, ${\rm Gal}(K/L) = \langle \sigma \rangle$ and $[K:L] = m$,
and such that $\rho$ and $\sigma$ commute.
 Let $D=(K/F, \rho, c)$  be an associative cyclic division algebra over $F$ of degree $n$,
  $c\in F_0$ and $d \in D^\times$.
 For $x= x_0 + x_1 e+x_2  e^2+\dots + x_{n-1}e^{n-1}\in D$, extend $\sigma$ to an automorphism $\sigma\in {\rm Aut}_L(D)$
of order $m$  via
$$\sigma(x)=\sigma(x_0) +  \sigma(x_1)e +\sigma(x_2)  e^2 +\dots +\sigma(x_{n-1}) e^{n-1}.$$
  For all $d \in D^\times$,
$$S_f=D[t;\sigma]/D[t;\sigma](t^m-d)$$
is the generalized nonassociative cyclic algebra $(D,\sigma,d)$ of dimension $m^2n^2$ over $F_0$.
The algebra is associative if and only if $d\in F_0$.
For all $d \in F^\times$,
$$(D, \sigma, d)\cong (L/F_0, \gamma, c)\otimes_{F_0} (F/F_0, \sigma, d),$$
i.e.  it is the tensor product of an associative and a nonassociative cyclic algebra \cite{P16}.
For $f\in F_0[t]$ this algebra appears in the classical literature on associative central simple algebras as
a \emph{generalized cyclic algebra}  of degree $m$  in \cite[Section 1.4]{J96}.

The algebras  $(D,\sigma,d)$ with $d\in L^\times$ or $d\in F^\times$
 are used to construct fast-decodable space-time block codes,  the matrix
representing their right multiplication with entries in $K$, i.e. computed with respect to the canonical basis
of $S_f$ as a left $K$-vector space, yields the codebooks in this case, cf. for instance \cite{P13.2}, \cite{PS15}, \cite{R13}.
\end{example}

\subsection{STBCs and coset coding}

A \emph{space-time block code} (STBC) is a set $\mathcal{C}$ of complex $n\times m$ matrices
 that satisfies a number of properties which determine how well the code performs.
 $\mathcal{C}$ is called \emph{fully diverse} if the difference of any two
code words  has full rank, so that $\det(X-X')\not=0$ for all matrices $X\not=X',$ $X,X'\in \mathcal{C} $.
 Since our codes $\mathcal{C}$ will be based on the matrix representing right multiplication in
 an algebra, they are linear and thus their
 \emph{minimum determinant} is given by
$$\delta(\mathcal{C})=\inf_{0\not=X\in \mathcal{C}}|\det(X)|^2.$$
If $\delta(\mathcal{C})$
 is bounded below by a constant, even if the codebook $\mathcal{C}$   is infinite,
 the code $\mathcal{C} $ has \emph{non-vanishing determinant} (NVD).
 If $\mathcal{C}$ is fully diverse,
$\delta(\mathcal{C})$ defines the \emph{coding gain} $\delta(\mathcal{C})^{\frac{1}{n}}$, and
the larger $\delta(\mathcal{C})$ is, the better the error performance of the code is expected to be.

We consider a model representing slow multiple antenna fading channels, which means that the channel is constant over $nL$ channel
uses, so the code contains $n\times nL$ codewords of the type $X=(X_1,\dots,X_L)$, with the $X_i$
matrices in some $\mathcal{C}$.

To construct a coset space-time block code we take the following approach: we take a
space-time block code $\mathcal{C}$ which corresponds to a natural order $\Lambda$ inside a generalized nonassociative
cyclic division algebra, this code is automatically fully diverse. A \emph{generalized nonassociative
cyclic division algebra} is a Petit algebra $S_f=K[t;\sigma]/K[t;\sigma](t^m-d)$ with $K$ a number field,
 or a Petit algebra $S_f=D[t;\sigma]/K[t;\sigma](t^m-d)$
with $D=(K/F,\rho,c)$ a cyclic division algebra of degree $n$ over a number field, each time
$f=t^m-d$ an irreducible polynomial chosen suitably as explained in the next section.
Then $\mathcal{C}$ consists of the square matrices with entries in $\mathcal{O}_K$
which represent the right multiplication in $S_f$ by a non-zero element, computed with respect to the canonical basis of
 $S_f$ as a left $\mathcal{O}_K$-module.
This is the \emph{inner code}.
The quotient of $\Lambda$ by a suitably chosen  two-sided ideal $\mathcal{J}$  generated
by an ideal $\mathcal{I}$ of $\mathcal{O}_F$ as in Section \ref{sec:naturalI} (or by an ideal $\mathcal{I}$ of a suitable subring of $\mathcal{O}_F$
in case we work with
$f\in D[t;\sigma]$ as in Section \ref{sec:naturalII})
 yields a finite nonassociative unital algebra $\Lambda/\mathcal{J}$, over which we design a
 code $\overline{\mathcal{C}}$ of length $L$.
This code $\overline{\mathcal{C}}$ is  the \emph{outer code}  and is an additive subgroup of
$$\bigoplus_{i=1}^{L}{\rm Mat}_s(\mathcal{O}_K/\mathcal{I}\mathcal{O}_K),$$
 with $s=m$ or $s=mn$.
Its codewords are of the type $X=(X_1,\dots,X_L)$, with the $X_i$
matrices representing the right multiplication in the algebra $\Lambda/\mathcal{J}$.

In order to obtain the $X_i$, we take the entries in the matrices representing the right multiplication in $\Lambda$, i.e. the entries of the
 matrices of  $\mathcal{C}$, and read them  modulo $\mathcal{I}\mathcal{O}_K$,
which gives the outer space-time block code $\overline{\mathcal{C}}$.
 The \emph{coset code} is obtained as the additive subgroup $\mathcal{C}'$ of matrices in
 $\bigoplus_{i=1}^{L}{\rm Mat}_s(\mathcal{O}_K)$
satisfying
$\pi(\mathcal{C}')=\overline{\mathcal{C}},$ where $\pi:\bigoplus_{i=1}^{L}{\rm Mat}_s(\mathcal{O}_K)\longrightarrow
 \bigoplus_{i=1}^{L}{\rm Mat}_s(\mathcal{O}_K/\mathcal{I}\mathcal{O}_K)$.
 By construction, $\mathcal{C}'$ has length $L$ and is contained in the inner code $\mathcal{C}$.
 The goal is to design a well performing code $\mathcal{C}'$, so that for instance it has a large minimum determinant, or
 is fast-decodable. The later is guaranteed automatically if the outer code is fast-decodable.

%
%

\section{Quotients of natural orders in $S_f$, I} \label{sec:naturalI}

When $S$ is a field, every skew polynomial ring $S[t;\sigma,\delta]$ can be made into either a twisted or a
differential polynomial ring by a linear change of variables. When constructing linear codes, however,
 it can be an advantage to consider general skew polynomial rings. For instance,
 cyclic $(f,\sigma,\delta)$-codes constructed from natural order algebras $S_f$ obtained from some monic
 $f\in S[t;\sigma,\delta]$ can
 produce better distance bounds than cyclic $(f,\sigma,\delta)$-codes constructed only with an automorphism,
 i.e. with $\delta=0$, as noted in \cite{BU14}. Therefore we keep a more  general skew polynomial setup in Sections \ref{subsec:assumptions}
and \ref{subsec:naturalI}, although
when applying the results to space-time codes later in this paper, we always assume that $\delta=0$.

\subsection{} \label{subsec:assumptions}
Let $K/F$ be a Galois extension of number fields of degree $n$. Let
$\mathcal{O}_F$ and $\mathcal{O}_K$ be the rings of integers of $F$ and $K$.
Let $\mathcal{I}$ be an ideal of $\mathcal{O}_F$ and
 $\pi:\mathcal{O}_K\longrightarrow \mathcal{O}_K/\mathcal{I}\mathcal{O}_K$ be
the canonical projection.
Let  $\sigma\in {\rm Gal}(K/F)$.
We have $\sigma(\mathcal{I}\mathcal{O}_K)\subset \mathcal{I}\mathcal{O}_K$ since
$\sigma|_F=id$. Thus $\sigma$ induces a ring homomorphism
$$\overline{\sigma}:\mathcal{O}_K/\mathcal{I}\mathcal{O}_K \longrightarrow \mathcal{O}_K/\mathcal{I}\mathcal{O}_K$$
with $\sigma=\overline{\sigma} \circ \pi$ and ${\rm Fix}(\overline{\sigma})=\mathcal{O}_F/\mathcal{I}$.

Suppose that $\delta$ is an $F$-linear left $\sigma$-derivation on $K$ such that
$\delta(\mathcal{O}_K)\subset \mathcal{O}_K$.
 Then $\delta$ induces a left $\overline{\sigma}$-derivation
 $$\overline{\delta}:\mathcal{O}_K/\mathcal{I}\mathcal{O}_K \longrightarrow \mathcal{O}_K/\mathcal{I}\mathcal{O}_K.$$
Since $\mathcal{O}_F$ is  a Dedekind domain,
$$\mathcal{I}=\mathfrak{q}_1^{s_1}\cdots\mathfrak{q}_t^{s_t}$$
for suitable prime ideals $\mathfrak{q}_i$ of $\mathcal{O}_F$ and so
$$\mathcal{O}_F/\mathcal{I}=\mathcal{O}_F/\mathfrak{q}_1^{s_1}\cdots\mathfrak{q}_t^{s_t}
\cong \mathcal{O}_F/\mathfrak{q}_1^{s_1}\times \cdots\times \mathcal{O}_F/\mathfrak{q}_t^{s_t},$$
$$\mathcal{O}_K/\mathcal{I}\mathcal{O}_K= \mathcal{O}_K/ \mathfrak{q}_1^{s_1} \cdots\mathfrak{q}_t^{s_t} \mathcal{O}_K\cong
\mathcal{O}_K/ \mathfrak{q}_t^{s_t}\mathcal{O}_K\times\dots\times \mathcal{O}_K/ \mathfrak{q}_t^{s_t} \mathcal{O}_K,$$
by the Chinese Remainder Theorem.
On each ring $\mathcal{O}_K/ \mathfrak{q}_i^{s_i}\mathcal{O}_K$ there is a canonical induced action of $\sigma$,
and a canonical derivation  induced by $\delta$.


\subsection{ }  \label{subsec:naturalI}


Suppose
$$f=\sum_{i=0}^{m}d_it^i\in \mathcal{O}_K[t;\sigma,\delta]$$
 is a monic skew polynomial,  irreducible in $K[t;\sigma,\delta]$.
 Consider the division algebra
$$S_f=K[t;\sigma,\delta]/K[t;\sigma,\delta] f$$
 over $S_0$.  Since $\sigma\in {\rm Gal}(K/F)$ and $\delta$ is $F$-linear, $S_f$ is an algebra over $S_0=F$.
The nonassociative  $\mathcal{O}_{F}$-algebra
$$\Lambda=\mathcal{O}_K[t;\sigma,\delta]/\mathcal{O}_K[t;\sigma,\delta] f$$
 is an $\mathcal{O}_{F}$-order in $S_f$ called the \emph{natural order}. $\Lambda$
 is uniquely determined whenever $f$ is not invariant, since then $K$ is the left nucleus of $S_f$ which uniquely
 determines $\mathcal{O}_{K}$ and in turn $\Lambda$.
 Since $f$ is irreducible in $K[t;\sigma,\delta]$, $\Lambda$ does not have any zero divisors.
Since $\mathcal{O}_{F}$ lies in the center of $\Lambda$  \cite{P16.1}, for every  ideal $\mathcal{I}$ in $\mathcal{O}_{F}$,
$\mathcal{I}\Lambda$ is a two-sided ideal of $\Lambda$. We have
\begin{equation}\label{equ:quotient}
\mathcal{I}\Lambda=\{al\,|\, a\in\mathcal{I},l\in\Lambda\}=\left\{\sum_{i=0}^{m-1}a_it^i\,|\, a_i\in
 \mathcal{I}\mathcal{O}_K \right\}.
 \end{equation}
 The surjective homomorphism of nonassociative rings
$$\Psi:\Lambda\longrightarrow  (\mathcal{O}_K/\mathcal{I}\mathcal{O}_K)[t;\overline{\sigma},\overline{\delta}]/
(\mathcal{O}_K/\mathcal{I}\mathcal{O}_K)[t;\overline{\sigma},\overline{\delta}]\overline{f}$$
$$g\mapsto \overline{g}$$
has kernel $\mathcal{I}\Lambda$ and
induces an isomorphism between the two unital nonassociative algebras given by
\begin{equation}\label{eq:relevant}
\Lambda/\mathcal{I}\Lambda\longrightarrow
(\mathcal{O}_K/\mathcal{I}\mathcal{O}_K)[t;\overline{\sigma},\overline{\delta}]/(\mathcal{O}_K/\mathcal{I}\mathcal{O}_K)
[t;\overline{\sigma},\overline{\delta}]\overline{f}=S_{\overline{f}},
\end{equation}
$$g+\mathcal{I}\Lambda \mapsto \overline{g}.$$
These are algebras over $\mathcal{O}_F/\mathcal{I}$. The algebra
 $S_{\overline{f}}$ is associative  if and only if $R\overline{f}$ is a two-sided ideal in
$R=(\mathcal{O}_K/\mathcal{I}\mathcal{O}_K)[t;\overline{\sigma},,\overline{\delta}]$.
The associative orders  from \cite{DO} appear here for invariant polynomials $f$ and $\delta=0$.

\subsection{} \label{ex:nca1}
Let $K/F$ be a Galois extension of degree $m$, ${\rm Gal}(K/F)=\langle\sigma\rangle$ and
 $A=S_f$ with $f=t^m-d\in \mathcal{O}_K[t;\sigma]$ irreducible in $K[t;\sigma]$.
Then $A=(K/F,\sigma,d)$ is a nonassociative cyclic division algebra of degree $m$ over $F$.
A natural order of $A$ is given by the $\mathcal{O}_F$-algebra $\Lambda=\mathcal{O}_K[t;\sigma]/\mathcal{O}_K[t;\sigma]f$,
and as a left $\mathcal{O}_K$-module,
$$\Lambda=\mathcal{O}_K \oplus  \mathcal{O}_K t\oplus \dots \oplus\mathcal{O}_K  t^{m-1}.$$
 The right multiplication in $\Lambda$ with $a=a_{m-1}t^{m-1}+\dots+a_1t+a_0$ is given by the
$m \times m$ matrix
\begin{equation} \label{equ:matrix_rep_cda}
\gamma(a) = \left[ \begin{array}{ccccc}
a_0 & d \sigma(a_{m-1})& d \sigma^2(a_{m-2}) & \cdots & d \sigma^{m-1}(a_1) \\
a_1 & \sigma(a_0) & d\sigma^2(a_{m-1}) & \cdots & d \sigma^{m-1}(a_{2}) \\
a_2 & \sigma(a_1) & \sigma^2(a_0) & \cdots & d \sigma^{m-1}(a_3)\\
\vdots & \vdots & \vdots & \ddots & \vdots \\
a_{m-1} & \sigma(a_{m-2}) & \sigma^2(a_{m-3}) & \cdots & \sigma^{m-1}(a_0) \end{array} \right].
\end{equation}
with entries in $\mathcal{O}_K$.
If $d\in \mathcal{O}_K\setminus \mathcal{O}_F$, $A$ is not associative and
 $\Lambda$ is uniquely determined.
 Since $\mathcal{O}_F$ lies in the center of $\Lambda$,
for any  ideal $\mathcal{I}$ of $\mathcal{O}_F$, $\mathcal{I}\Lambda$ is a two-sided ideal of
$\Lambda$.
We have the following $(\mathcal{O}_F/\mathcal{I}\mathcal{O}_F)$-algebra isomorphism:
\begin{equation}\label{eq:generalcyclic}
\Lambda/\mathcal{I}\Lambda\cong ((\mathcal{O}_K/\mathcal{I}\mathcal{O}_K)/(\mathcal{O}_F/\mathcal{I}\mathcal{O}_F),
\overline{\sigma},\bar{d})=S_{\overline{f}}
\end{equation}
with $\bar{d}=d+\mathcal{I}$, $\overline{\sigma}(a+\mathcal{I}\mathcal{O}_K)=
\sigma(a)+\mathcal{I}\mathcal{O}_K$ for all $a\in \mathcal{O}_K$, and
$$\overline{f}(t)=t^m-\bar{d}\in (\mathcal{O}_K/\mathcal{I}\mathcal{O}_K)[t;\overline{\sigma}].$$

If $d\in \mathcal{O}_F$ is non-zero, $A$ is an associative cyclic division algebra, $S_{\overline{f}}$
is an associative generalized cyclic algebra (and if $\bar{d}\not=0$, a classical associative cyclic algebra),
 and $\Lambda$ depends on the choice of the maximal subfield $K$ in $A$.  This case
  is covered in \cite{DO} and \cite{OS} and these associative algebras were employed in the code constructions in \cite{SPO12}.

Equation (\ref{equ:quotient}) and the isomorphism in (\ref{eq:generalcyclic}) mean
 that the right multiplication in $\Lambda/\mathcal{I}\Lambda$ is given by the
$m \times m$ matrix in (\ref{equ:matrix_rep_cda}) where the entries now are read modulo $\mathcal{I}\mathcal{O}_K$.
We call this matrix $\gamma(\overline{a})$. Therefore we can obtain coset codes  by taking the
pre-image of codewords $(\gamma(\overline{x_1}),\dots,\gamma(\overline{x_L}))$  under
$$ \pi:\bigoplus_{i=0}^L {\rm Mat}_{m}(\mathcal{O}_K) \longrightarrow \bigoplus_{i=0}^L {\rm Mat}_{m}
(\mathcal{O}_K/\mathcal{I}\mathcal{O}_K).$$

%
%

\section{Quotients of natural orders in $S_f$, II} \label{sec:naturalII}

\subsection{}\label{subsec:iteratedI}

Let $K/F$ be a cyclic Galois extension of number fields of degree $n$ with  ${\rm Gal}(K/F)=\langle \rho\rangle$.
 Let $\mathcal{O}_F$ and $\mathcal{O}_K$ be the corresponding rings of integers.
Let $D=(K/F, \rho, c)$ be a cyclic division algebra over $F$ such that $c\in \mathcal{O}_F^\times$.
 Let $\mathcal{D}=(\mathcal{O}_K/\mathcal{O}_F,\rho, c)$
be the generalized associative cyclic algebra over $\mathcal{O}_F$ of degree $n$ such that
 $\mathcal{D} \otimes_{\mathcal{O}_F}F=(K/F, \rho, c)=D$.
 Then
$$\mathcal{D}=\mathcal{O}_K \oplus   \mathcal{O}_Ke \oplus \dots  \oplus\mathcal{O}_K  e^{n-1}$$
is a natural $\mathcal{O}_F$-order of $D$, cf. \ref{subsec:naturalI} or \cite{DO}.

Let $\sigma\in {\rm Aut}(D)$ and $\delta$ be a $\sigma$-derivation on $D$, satisfying the following
criteria:
\begin{itemize}
\item $F_0=F\cap {\rm Fix}(\sigma)\cap {\rm Const}(\delta)$ is a number field.
\item $\sigma(\mathcal{D})\subset \mathcal{D}$ and $\delta(\mathcal{D})\subset \mathcal{D}$.
\item $S_0=\mathcal{O}_F\cap {\rm Fix}(\sigma)\cap {\rm Const}(\delta)$ is  the ring of integers of $F_0$ where here
$\sigma$ and $\delta$ denote the restrictions of $\sigma$ and $\delta$ to $\mathcal{D}$.
\end{itemize}

Suppose
$$f=\sum_{i=0}^{m}d_it^i\in \mathcal{D}[t;\sigma,\delta]$$
 is a monic skew polynomial, irreducible in $D[t;\sigma,\delta]$. Consider the division algebra
$$S_f=D[t;\sigma,\delta]/D[t;\sigma,\delta] f$$
 over $F_0$. Then the $S_0$-order
$$\Lambda=\mathcal{D}[t;\sigma,\delta]/\mathcal{D}[t;\sigma,\delta] f=
\mathcal{O}_K \oplus  \mathcal{O}_K e\oplus  \dots\oplus\mathcal{O}_K  e^{n-1}t^{m-1}$$
 is the \emph{natural order} of $S_f$.
The center of $\Lambda$ contains $S_0$.
Since $f$ is irreducible in $D[t;\sigma,\delta]$, $\Lambda$ does not have  zero divisors  \cite{P16.1}.

Let $\mathcal{I}$ be an ideal in $S_0$.  $S_0$
is contained in the center of $\mathcal{D}$ and the center of $\Lambda$, thus
 $\mathcal{I}\mathcal{D}$ is a two-sided ideal of $\mathcal{D}$
and $\mathcal{I}\Lambda$ is a two-sided ideal of $\Lambda$.  We have
\begin{equation}\label{equ:quotientII}
\mathcal{I}\Lambda=\{al\,|\, a\in\mathcal{I},l\in\Lambda\}=\left\{\sum_{i=0}^{m-1}a_it^i\,|\, a_i\in
 \mathcal{I}\mathcal{O}_K\right\}.
 \end{equation}
Let $\pi:\mathcal{D}\longrightarrow \mathcal{D}/\mathcal{I}\mathcal{D}$ be
the canonical projection. We have $\sigma( \mathcal{I}\mathcal{D})\subset \mathcal{I}\mathcal{D}$ since
 $\mathcal{I}\subset{\rm Fix}(\sigma)$ and $\sigma(\mathcal{D})\subset \mathcal{D}$ by  assumption.
 Therefore $\sigma$ induces a ring homomorphism
$$\overline{\sigma}:\mathcal{D}/\mathcal{I}\mathcal{D} \longrightarrow \mathcal{D}/\mathcal{I}\mathcal{D}$$
with
$${\rm Fix}(\overline{\sigma})={\rm Fix}(\sigma)/\mathcal{I}{\rm Fix}(\sigma)$$
and $\sigma=\overline{\sigma} \circ \pi$.
We also have
$\delta( \mathcal{I}\mathcal{D})\subset  \mathcal{I}\mathcal{D}$ by  assumption.
 That means $\delta$ induces a left $\overline{\sigma}$-derivation
 $$\overline{\delta}:\mathcal{D}/\mathcal{I}\mathcal{D} \longrightarrow \mathcal{D}/\mathcal{I}\mathcal{D}$$
 with field of constants
 $${\rm Const}(\overline{\delta})={\rm Const}(\delta)/\mathcal{I}.$$
 The surjective homomorphism of nonassociative rings
$$
 \Psi:\Lambda\longrightarrow  (\mathcal{D}/\mathcal{I}\mathcal{D})[t;\overline{\sigma},\overline{\delta}]/
(\mathcal{D}/\mathcal{I}\mathcal{D})[t;\overline{\sigma},\overline{\delta}]\overline{f},\quad g\mapsto \overline{g}
$$
has kernel $\mathcal{I}\Lambda$ and
induces an isomorphism of unital nonassociative algebras
\begin{equation}\label{eq:relevantII}
\Lambda/\mathcal{I}\Lambda\cong
(\mathcal{D}/\mathcal{I}\mathcal{D})[t;\overline{\sigma},\overline{\delta}]/(\mathcal{D}/\mathcal{I}\mathcal{D})
[t;\overline{\sigma},\overline{\delta}]\overline{f}, \quad g+\mathcal{I}\Lambda \mapsto \overline{g}
\end{equation}
 over
$$\overline{S_0}={\rm Fix}(\overline{\sigma})\cap {\rm Const}(\overline{\delta})\cap\overline{F} $$
with $\overline{F}=\mathcal{O}_F/\mathcal{I}\mathcal{O}_F$.

\subsection{Example} \label{subsec:iterated}

Let $F$, $L$ and $K$ be number fields and let $K$ be a cyclic extension of both $F$ and $L$ such that
\begin{enumerate}
\item ${\rm Gal}(K/F) = \langle \rho \rangle$ and $[K:F] = n$,
\item ${\rm Gal}(K/L) = \langle \sigma \rangle$ and $[K:L] = m$,
\item $\rho$ and $\sigma$ commute
\end{enumerate}
as in Example \ref{ex:gencyclic}.
Let $F_0=F\cap L$. Let $\mathcal{D}=(\mathcal{O}_K/\mathcal{O}_F, \rho, c)$, $c \in \mathcal{O}_{F_0}$,
be an associative cyclic algebra over $\mathcal{O}_F$ of degree $n$ such that
 $D=(K/F, \rho, c)=\mathcal{D} \otimes_{\mathcal{O}_F}F$ is a division algebra over $F$.
 For $x= x_0 + ex_1 + e^2x_2 +\dots + e^{n-1}x_{n-1}\in D$ where $1,e,\dots,e^{n-1}$ is the standard basis of $D$, define
 $\sigma\in {\rm Aut}_L(D)$  via
$$\sigma(x)=\sigma(x_0) +  \sigma(x_1)e + \sigma(x_2)e^2 +\dots + \sigma(x_{n-1})e^{n-1}.$$
Since $c \in \mathcal{O}_{F_0}$, $\sigma\in {\rm Aut}_L(D)$ has order $m$
and restricts to $\sigma\in {\rm Aut}_{\mathcal{O}_{L}}(\mathcal{D})$.
Let $f=t^m-d\in \mathcal{D}[t;\sigma]$ be irreducible in $D[t;\sigma]$.
Then the $F_0$-algebra $S_f=(D,\sigma,d)$ is a division algebra.
 For coding purposes, usually $d\in \mathcal{O}_F^\times$ or $d\in \mathcal{O}_L^\times$.

A natural order of $(D,\sigma,d)$ is given by the algebra $\Lambda=\mathcal{D}[t;\sigma]/\mathcal{D}[t;\sigma]f$,
and
$$\Lambda=\mathcal{O}_K \oplus  \mathcal{O}_K e\oplus  \dots\oplus\mathcal{O}_K  e^{n-1}t^{m-1}$$
written as a left $\mathcal{O}_K$-module.
Let $\mathcal{I}$ be an ideal in $\mathcal{O}_{F_0}$. Then there is an algebra isomorphism
$$\Lambda/\mathcal{I}\Lambda\cong
(\mathcal{D}/\mathcal{I}\mathcal{D})[t;\overline{\sigma}]/(\mathcal{D}/\mathcal{I}\mathcal{D})
[t;\overline{\sigma}]\overline{f},\quad g+\mathcal{I}\Lambda \mapsto \overline{g}.$$
These are algebras over
 $\overline{F_0}=\mathcal{O}_{F_0}/\mathcal{I}$. This means that the quotient
$\Lambda/\mathcal{I}\Lambda$ is isomorphic to the  $\overline{F_0}$-algebra
$(\overline{D}, \overline{\sigma}, \overline{d}),$
where
$\overline{D}=\mathcal{D}/\mathcal{I}\mathcal{D}$
is a generalized associative cyclic algebra over ${\rm Fix}(\overline{\rho})$.

Note that we will restrict our considerations  to $d\in \mathcal{O}_F^\times$ or $d\in \mathcal{O}_L^\times$
as we are dealing with applications to space-time block codes from now on.

 The right multiplication in $\Lambda$ with a non-zero $x\in \Lambda$ is given by
the $mn \times mn$ matrix $M(x)$ with entries in $\mathcal{O}_K$ obtained by taking the right regular representation $\gamma(x)$ in $\mathcal{D}$ of each
entry  in the $m\times m$-matrix
 \[Y=\gamma(x) = \left[ \begin{array}{ccccc}
x_0 & d \sigma(x_{m-1})& d \sigma^{2}(x_{m-2}) & \cdots & d \sigma^{m-1}(x_1) \\
x_1 & \sigma(x_0) & d \sigma^{2}(x_{m-1}) & \cdots & d \sigma^{m-1}(x_{2}) \\
x_2 & \sigma(x_1) & \sigma^{2}(x_0) & \cdots & d \sigma^{m-1}(x_3)\\
\vdots & \vdots & \vdots & \ddots & \vdots \\
x_{m-1} & \sigma(x_{m-2}) & \sigma^{2}(x_{m-3}) & \cdots & \sigma^{m-1}(x_0) \end{array} \right] \]
which has entries in  $\mathcal{D}$.
Thus $M(x)$  is given by
\begin{equation} \label{equ:matrix_rep_cdaII}
M(x) = \left[ \begin{array}{cccc}
\gamma(x_0) & \gamma(d) \sigma(\gamma(x_{m-1}))&  \cdots & \gamma(d) \sigma^{m-1}(\gamma(x_1)) \\
\gamma(x_1) & \sigma(\gamma(x_0)) & \cdots & \gamma(d) \sigma^{m-1}(\gamma(x_{2})) \\
                                       \vdots & \vdots  & \ddots & \vdots \\
\gamma(x_{m-1}) & \sigma(\gamma(x_{m-2})) & \cdots & \sigma^{m-1}(\gamma(x_0)) \end{array} \right]
\end{equation}
where $\sigma(\gamma(x))$ means we apply $\sigma$ to each entry of the $m\times m$-matrix $\gamma(x)$.
  The matrices $M(x)$ induce a fully diverse  linear space-time
 block code. If $d\in \mathcal{O}_F$, then in particular
$\det(M(x))\in \mathcal{O}_F$ (\cite{MO13}, \cite[Remark 5]{PS15}).
And if $d\in \mathcal{O}_L$, then
\begin{equation} \label{equ:matrix_rep_A}
 M(x) = \begin{bmatrix}
             \gamma(x_0) & d \sigma(\gamma(x_{n-1}))& d \sigma^{2}(\gamma(x_{n-2})) & \cdots & d \sigma^{m-1}(\gamma(x_1)) \\
             \gamma(x_1) & \sigma(\gamma(x_0)) & d \sigma^{2}(\gamma(x_{n-1})) & \cdots & d \sigma^{m-1}(\gamma(x_{2})) \\
             \gamma(x_2) & \sigma(\gamma(x_1)) & \sigma^{2}(\gamma(x_0)) & \cdots & d \sigma^{m-1}(\gamma(x_3))\\
             \vdots & \vdots & \vdots & \ddots & \vdots \\
             \gamma(x_{n-1}) & \sigma(\gamma(x_{n-2})) & \sigma^{2}(\gamma(x_{n-3})) & \cdots & \sigma^{m-1}(\gamma(x_0))
             \end{bmatrix}
\end{equation}
with $d\sigma(\gamma(x_{n-1}))$ etc. denoting the scalar multiplication of the matrix  with $d$ and
$\det(\gamma(M(x))) \in L\cap \mathcal{O}_K=\mathcal{O}_L$ (\cite{R13}, \cite[Lemma 19]{PS15}).

The algebras $A=(D,\sigma,d)$  are behind the fast-decodable iterated codes in \cite{P13.2}, \cite{PS15}, \cite{MO13} \cite{R13}.

Equation (\ref{equ:quotientII}) and the isomorphism in (\ref{eq:relevantII}) imply
 that the right multiplication in $\Lambda/\mathcal{I}\Lambda$ is given by the
$mn \times mn$ matrix in (\ref{equ:matrix_rep_cdaII}) where the entries are read modulo $\mathcal{I}\mathcal{O}_K$.
We call this matrix $M(\overline{x})$.  Therefore we obtain coset codes  by taking the
pre-image of codewords $(M(\overline{x_1}),\dots,M(\overline{x_L}))$  under
$$ \pi:\bigoplus_{i=0}^L {\rm Mat}_{nm}(\mathcal{O}_K) \longrightarrow \bigoplus_{i=0}^L
{\rm Mat}_{nm}(\mathcal{O}_K/\mathcal{I}\mathcal{O}_K).$$

%
%

\section{The structure of quotients of natural orders} \label{sec:structure}

From now on we will only consider the
 generalized nonassociative cyclic algebras  introduced in Sections \ref{ex:nca1} and \ref{subsec:iterated},
as these play an important role in coding theory, both for $\sigma$-constant cyclic linear codes and for
space-time block coding. The setup from \cite{OS} is obtained as a special case whenever $f$ is invariant.

If desired, all the
results  can be generalized verbatim or with slight adjustments to the general cases considered up to now.

 We  look at our two setups separately:

\subsection{Quotients of orders in a nonassociative cyclic algebra}

Let $D=(K/F,\sigma, d)$ be a nonassociative cyclic division algebra
of degree $m$ with $d\in \mathcal{O}_K^\times$ and $\Lambda$ a natural order of $D$.

Unlike in the associative setting we cannot simply limit ourselves to the study of non-zero two-sided ideals $\mathcal{J}$ of $\Lambda$
using a correspondence between them and non-zero ideals of $\mathcal{O}_{F}$, since we do not know whether
$\mathcal{J}\cap \mathcal{O}_{F}$ is a non-zero ideal, instead we only have that
 for every non-zero two-sided ideal $\mathcal{J}$  in $\Lambda$, $\mathcal{I}=\mathcal{J}\cap \mathcal{O}_{K}$
is a non-zero ideal of $\mathcal{O}_{K}$:

\begin{lemma} \label{le:equivI}
(i) Every $a\in \Lambda$ is the zero of the characteristic polynomial of $\gamma(a)$,
 which is a polynomial over $\mathcal{O}_K$. In particular, if $D$ is associative, the elements of
$\Lambda$  are  integral over  $\mathcal{O}_{F}$.
\\ (ii) Let $\mathcal{J}$ be a non-zero two-sided ideal in $\Lambda$. Then
$\mathcal{J}\cap \mathcal{O}_{K}\not=0.$
\end{lemma}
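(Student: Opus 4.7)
The plan is to prove (i) by a Cayley--Hamilton argument applied to the $\mathcal{O}_K$-linear endomorphism $\gamma(a)$ of $\Lambda$, then to deduce (ii) by extracting the constant term of the resulting polynomial relation. For (i), I first check that every entry of the matrix in \eqref{equ:matrix_rep_cda} lies in $\mathcal{O}_K$: this is automatic because $\sigma\in\mathrm{Gal}(K/F)$ and $d\in\mathcal{O}_K^{\times}$ both preserve $\mathcal{O}_K$. So the characteristic polynomial $p_a(x)\in\mathcal{O}_K[x]$ of $\gamma(a)$ is monic with coefficients in $\mathcal{O}_K$, and Cayley--Hamilton applied inside $\mathrm{Mat}_m(\mathcal{O}_K)$ yields $p_a(\gamma(a))=0$ as an $\mathcal{O}_K$-linear operator on $\Lambda$ (we are allowed to identify matrix action with action of $\gamma(a)$ on $\Lambda$ because $\mathcal{O}_K\subset\mathrm{Nuc}_l(\Lambda)$). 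Evaluating this operator identity at $1\in\Lambda$ and using that $\gamma(a)^k(1)=a^k$ for the iterated right power $a^{k+1}=a^k\cdot a$ (which is unambiguous even without associativity) collapses the identity to $p_a(a)=0$ in $\Lambda$. The integrality statement in the associative case then follows by transitivity: $a$ is integral over $\mathcal{O}_K$, and $\mathcal{O}_K$ is integral over $\mathcal{O}_F$.

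For (ii), I pick any $a\in\mathcal{J}\setminus\{0\}$ and rewrite the polynomial identity from (i) as
\[
c_0=-\bigl(a^m+c_{m-1}a^{m-1}+\cdots+c_1 a\bigr).
\]
Since $\mathcal{J}$ is a two-sided ideal of $\Lambda$ containing $a$, each right power $a^i$ with $i\geq 1$ lies in $\mathcal{J}$, and then $c_i\,a^i\in\mathcal{J}$ as well because $c_i\in\mathcal{O}_K\subset\Lambda$ and $\mathcal{J}$ absorbs left multiplication by $\Lambda$. Hence $c_0\in\mathcal{J}$, and by construction $c_0\in\mathcal{O}_K$, so $c_0\in\mathcal{J}\cap\mathcal{O}_K$.

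The one step that actually needs the division-algebra hypothesis, and which I expect to be the main (though mild) obstacle, is ensuring $c_0\neq 0$. Up to sign $c_0=(-1)^m\det\gamma(a)$, so I need $\det\gamma(a)\neq 0$ in $K$. This holds because $S_f=D$ is a division algebra: right multiplication $R_a$ by the non-zero element $a$ is bijective on $S_f$, so its matrix $\gamma(a)$, which is the same whether computed over $\Lambda$ or over $D$, is invertible over $K$ with non-zero determinant in $\mathcal{O}_K$. Therefore $c_0$ is a non-zero element of $\mathcal{J}\cap\mathcal{O}_K$, and the latter is a non-zero ideal of $\mathcal{O}_K$. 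The recurring conceptual point is that nonassociativity never interferes here, because only right powers of a single element $a$ are manipulated, for which bracketings are unambiguous.
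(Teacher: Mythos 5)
Your proposal is correct and follows the same overall strategy as the paper's proof: apply Cayley--Hamilton to $\gamma(a)\in {\rm Mat}_m(\mathcal{O}_K)$, pull the resulting identity back to a monic relation for the (left-normed, hence unambiguous) powers of $a$ with coefficients in $\mathcal{O}_K$, and then extract a coefficient of that relation to land a nonzero element in $\mathcal{J}\cap\mathcal{O}_K$. The one point where you genuinely diverge is in part (ii), in how the nonvanishing of the extracted element is secured. The paper does not argue that the constant term is nonzero; instead it takes $b_i$ to be the lowest nonvanishing coefficient of the relation $j^s+b_{s-1}j^{s-1}+\dots+b_0=0$, factors it as $j^i\bigl(j^{s-i}+\dots+b_i\bigr)=0$, and uses that $\Lambda$ has no zero divisors (because $D$ is a division algebra) to conclude $b_i=-\bigl(j^{s-i}+\dots+b_{s-1}j^{s-i-1}\bigr)\in\mathcal{J}\cap\mathcal{O}_K$. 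You instead note that $c_0=\pm\det\gamma(a)\neq 0$ because $R_a$ is bijective for $a\neq 0$; this invokes the same division hypothesis but more directly, and avoids the factoring step. Both arguments are valid. A small additional merit of your write-up is that it makes explicit the evaluation-at-$1$ step $\gamma(a)^k(1)=a^k$ and why only left-normed powers occur, which is exactly the point the paper's proof glosses over when it passes from $p_a(\gamma(a))=0$ to $p_a(a)=0$ by appealing to the $K$-linearity of the embedding (linearity alone is not enough in the nonassociative setting; one really does need to evaluate the operator identity at $1$, as you do).
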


\begin{proof}
(i) Since $K\subset {\rm Nuc}_r(D)$,  left multiplication $L_a$ with any $a\in D^\times$ is a
linear endomorphism of the right $K$-module $D$, so that $L_a\in {\rm End}_{K}(D)$ and thus
$\gamma: D \longrightarrow {\rm End}_{K}(D)\longrightarrow
{\rm Mat}_m(K), a\mapsto L_a\mapsto \gamma(a)$ is a $K$-linear embedding of $K$-vector spaces,
where $\gamma(a)$ is the matrix representing right multiplication in $D$ defined in Example \ref{ex:nca1}.

For $a\in\Lambda$, the entries of the matrix  $\gamma(a)$ are all in $\mathcal{O}_K$ and the characteristic polynomial
of the matrix of $\gamma(a)$ is a polynomial
over $\mathcal{O}_K$. By the Theorem of Cayley-Hamilton, the matrix $\gamma(a)$ inserted into its own characteristic
 polynomial gives the zero matrix. Since
the embedding of $ D$ into ${\rm Mat}_m(K)$ is $K$-linear, or respectively, the embedding of $\Lambda$ into
${\rm Mat}_m(\mathcal{O}_K)$
is $\mathcal{O}_K$-linear, thus $a$ also is a zero of the characteristic polynomial of $\gamma(a)$, a polynomial over
$\mathcal{O}_K$. Therefore any $a\in \Lambda$ is the zero of the characteristic polynomial of $\gamma(a)$.
The second assertion is \cite[Lemma 2]{OS}.
\\ (ii) The proof is similar  to the one of \cite[Lemma 3]{OS}: Let $j\in\mathcal{J}$, $j\not=0$, then $j$
is the zero of the characteristic polynomial of $\gamma(a)$  by (i), which is a polynomial over  $\mathcal{O}_{K}$.
Hence there are $b_i\in \mathcal{O}_{K}$ such that $j^s+b_{s-1}j^{s-1}+\dots+b_1j+s_0=0$.
Suppose that $b_0=b_1=\dots=b_{i-1}=0$ and $b_i=0$, then $j^s+b_{s-1}j^{s-1}+\dots+b_1j+s_0=
j^i(j^{s-i}+\dots+b_{s-1}j^{s-i-1}+b_i)=0$.
Since $D$ is division, $\Lambda$ has no non-trivial zero divisors. Thus $j^{s-i}+\dots+b_{s-1}j^{s-i-1}+b_i=0$
and $b_i\not=0$, implying that $b_i=-(j^{s-i}+\dots+b_{s-1}j^{s-i-1})\in \mathcal{J}\cap \mathcal{O}_K$.
\end{proof}

\begin{remark}
 Note that, contrary to the situation for associative division algebras studied in \cite{OS}, the embedding
$D\longrightarrow {\rm Mat}_m(K)$ only
embeds $D$ into ${\rm Mat}_m(K)$ as a $K$-vector space.
\end{remark}

What we can still say is that  any non-zero ideal $\mathcal{I}$ of $\mathcal{O}_{F}$ lies in the center of
 $\Lambda$ and generates the two-sided ideal
 $\mathcal{I}\Lambda$ where
$$\mathcal{I}\Lambda\cap \mathcal{O}_F=\mathcal{I}.$$
From now on let $\mathcal{I}$ be a non-zero two-sided ideal of $\mathcal{O}_F$, i.e
$\mathcal{I}=\mathfrak{q}_1^{s_1}\cdots \mathfrak{q}_t^{s_t}$ for suitable prime
ideals of $\mathcal{O}_F$, and
$$\mathcal{O}_F/\mathcal{I}=\mathcal{O}_F/\mathfrak{q}_1^{s_1}\cdots\mathfrak{q}_t^{s_t}
\cong \mathcal{O}_F/\mathfrak{q}_1^{s_1}\times \cdots\times \mathcal{O}_F/\mathfrak{q}_t^{s_t},$$
\begin{equation}\label{eq:overring}
\mathcal{O}_K/\mathcal{I}\mathcal{O}_K= \mathcal{O}_K/ \mathfrak{q}_1^{s_1} \cdots\mathfrak{q}_t^{s_t} \mathcal{O}_K\cong
\mathcal{O}_K/ \mathfrak{q}_t^{s_t}\mathcal{O}_K\times\dots\times \mathcal{O}_K/ \mathfrak{q}_t^{s_t} \mathcal{O}_K.
\end{equation}

We immediately obtain from Equation (\ref{eq:generalcyclic}):

\begin{theorem} \label{thm:main}
For
$$\overline{\sigma}(u+\mathcal{I}\mathcal{O}_K)=\sigma(u)+\mathcal{I}\mathcal{O}_K$$
and $\bar{d}=d+\mathcal{I}\mathcal{O}_K$,
\begin{equation}\label{equ:ex5}
\Lambda/\mathcal{I}\Lambda\cong
((\mathcal{O}_K/\mathcal{I}\mathcal{O}_K)/(\mathcal{O}_F/\mathcal{I}),\overline{\sigma},\bar{d})
\end{equation}
is a  generalized nonassociative cyclic algebra over $\mathcal{O}_F/\mathcal{I}$.
\end{theorem}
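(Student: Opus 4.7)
The plan is to deduce the statement as a direct specialization of the general isomorphism (\ref{eq:generalcyclic}) established in Section \ref{subsec:naturalI}, applied to the nonassociative cyclic setup $f = t^m - d$ with $\delta = 0$. Since $\mathcal{I}$ is already an ideal of $\mathcal{O}_F$, I would first note the tautological identification $\mathcal{I}\mathcal{O}_F = \mathcal{I}$, so that the base ring $\mathcal{O}_F/\mathcal{I}$ in the statement agrees with $\mathcal{O}_F/\mathcal{I}\mathcal{O}_F$ appearing in (\ref{eq:generalcyclic}).

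Next I would verify that all the data on the right-hand side of (\ref{equ:ex5}) are well defined. The key point is that $\sigma \in \mathrm{Gal}(K/F)$ fixes $F$ pointwise and hence fixes every element of $\mathcal{I}\subset\mathcal{O}_F$, so $\sigma(\mathcal{I}\mathcal{O}_K) \subset \mathcal{I}\mathcal{O}_K$; this makes $\overline{\sigma}(u + \mathcal{I}\mathcal{O}_K) = \sigma(u) + \mathcal{I}\mathcal{O}_K$ a well-defined ring automorphism of $\mathcal{O}_K/\mathcal{I}\mathcal{O}_K$, satisfying $\overline{\sigma}^{\,m} = \mathrm{id}$ (inherited from $\sigma^m = \mathrm{id}$) and restricting to the identity on $\mathcal{O}_F/\mathcal{I}$. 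The element $\bar{d} = d + \mathcal{I}\mathcal{O}_K$ then produces the monic skew polynomial $\overline{f} = t^m - \bar{d} \in (\mathcal{O}_K/\mathcal{I}\mathcal{O}_K)[t;\overline{\sigma}]$ used to form the Petit algebra $S_{\overline{f}}$.

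With these ingredients in place, the isomorphism (\ref{eq:generalcyclic}) applied to our particular $f$ reads
\[
\Lambda/\mathcal{I}\Lambda \;\cong\; ((\mathcal{O}_K/\mathcal{I}\mathcal{O}_K)/(\mathcal{O}_F/\mathcal{I}), \overline{\sigma}, \bar{d}),
\]
and by the definition of a (generalized) nonassociative cyclic algebra recalled in Section \ref{sec:S_f}, the right-hand side is by construction a generalized nonassociative cyclic algebra over $\mathcal{O}_F/\mathcal{I}$.

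I do not foresee a substantial obstacle: the theorem is essentially a cleaned-up specialization of (\ref{eq:generalcyclic}) to the cyclic case $f=t^m-d$, and the author already signals this by saying it is immediate from (\ref{eq:generalcyclic}). The only mildly subtle point worth flagging is that the order of $\overline{\sigma}$ on the quotient $\mathcal{O}_K/\mathcal{I}\mathcal{O}_K$ may properly divide $m$, so the algebra on the right can collapse to a smaller-looking object; however, the Petit construction with $\overline{f} = t^m - \bar{d}$ remains well defined since only $\overline{\sigma}^{\,m} = \mathrm{id}$ together with $\mathcal{O}_F/\mathcal{I} \subset \mathrm{Fix}(\overline{\sigma})$ are required, and the terminology ``generalized nonassociative cyclic'' is preserved in the sense of Definition~\ref{sec:S_f}(i).
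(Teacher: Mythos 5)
Your proposal is correct and follows the paper's own route: the paper derives Theorem \ref{thm:main} immediately from the isomorphism (\ref{eq:generalcyclic}) of Section \ref{ex:nca1}, which is exactly the specialization you carry out, together with the same well-definedness checks for $\overline{\sigma}$ and $\bar{d}$. Your remark that the order of $\overline{\sigma}$ on the quotient may drop is a sensible extra observation but does not change the argument.
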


The map $\Psi$  defined in Section \ref{subsec:naturalI} together with the isomorphism from Equation (\ref{eq:overring})
 implies that
the nonassociative algebra $\Lambda/\mathcal{I}\Lambda$ decomposes into a
product of generalized nonassociative cyclic algebras $D_i=(S_i/R_i,\sigma,d)$ where all the rings $R_i$ and $S_i$ are finite:

\begin{lemma}\label{le:decomp}
The generalized nonassociative cyclic algebra $\Lambda/\mathcal{I}\Lambda$ of Theorem \ref{thm:main} can be described as
a direct sum of generalized nonassociative cyclic algebras, i.e.
\begin{equation}
\Lambda/\mathcal{I}\Lambda\cong
((\mathcal{O}_K/\mathfrak{q}_1^{s_1}\mathcal{O}_K)/(\mathcal{O}_F/\mathfrak{q}_1^{s_1}),\overline{\sigma},d+
\mathfrak{q}_1^{s_1})
\times \dots\times
((\mathcal{O}_K/\mathfrak{q}_t^{s_t}\mathcal{O}_K)/(\mathcal{O}_F/\mathfrak{q}_t^{s_t}),\overline{\sigma},d+\mathfrak{q}_t^{s_t})
\end{equation}
where the respective maps $\overline{\sigma}$ are defined via
$$\overline{\sigma}(u+\mathfrak{q}_j^{s_j}\mathcal{O}_K)=\sigma(u)+\mathfrak{q}_j^{s_j}\mathcal{O}_K.$$
\end{lemma}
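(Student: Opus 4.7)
The plan is to combine Theorem \ref{thm:main} with the Chinese Remainder decompositions from Equation (\ref{eq:overring}), and then to observe that the Petit algebra construction $S_f$ is compatible with finite direct product decompositions of its coefficient ring. By Theorem \ref{thm:main}, we may identify $\Lambda/\mathcal{I}\Lambda$ with the Petit algebra $S_{\bar f}$ obtained from $\bar f = t^m - \bar d$ inside $R = (\mathcal{O}_K/\mathcal{I}\mathcal{O}_K)[t;\overline{\sigma}]$, so the task reduces to splitting $S_{\bar f}$ as a product of $t$ Petit algebras according to the prime factorization $\mathcal{I} = \mathfrak{q}_1^{s_1}\cdots\mathfrak{q}_t^{s_t}$.

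First I would verify that $\overline{\sigma}$ respects the CRT decomposition. Because $\sigma\in{\rm Gal}(K/F)$ fixes each prime $\mathfrak{q}_i$ of $\mathcal{O}_F$ setwise, $\sigma$ stabilises each $\mathfrak{q}_i^{s_i}\mathcal{O}_K$, so under the isomorphism $\mathcal{O}_K/\mathcal{I}\mathcal{O}_K \cong \prod_{i=1}^t \mathcal{O}_K/\mathfrak{q}_i^{s_i}\mathcal{O}_K$, the map $\overline{\sigma}$ acts componentwise as $\overline{\sigma}_i(u+\mathfrak{q}_i^{s_i}\mathcal{O}_K)=\sigma(u)+\mathfrak{q}_i^{s_i}\mathcal{O}_K$ with fixed subring $\mathcal{O}_F/\mathfrak{q}_i^{s_i}$. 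Consequently, formation of the twisted polynomial ring commutes with the direct product:
\begin{equation*}
R = (\mathcal{O}_K/\mathcal{I}\mathcal{O}_K)[t;\overline{\sigma}]
\;\cong\; \prod_{i=1}^{t}(\mathcal{O}_K/\mathfrak{q}_i^{s_i}\mathcal{O}_K)[t;\overline{\sigma}_i],
\end{equation*}
with the additive decomposition being CRT applied to each coefficient and multiplication $t\cdot a = \overline{\sigma}(a)\cdot t$ becoming componentwise.

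Next I would transport $\bar f$ across this isomorphism. Since $\bar d$ corresponds to the tuple $(d+\mathfrak{q}_1^{s_1}\mathcal{O}_K,\ldots,d+\mathfrak{q}_t^{s_t}\mathcal{O}_K)$, the monic skew polynomial $\bar f = t^m - \bar d$ corresponds to the tuple $(\bar f_1,\ldots,\bar f_t)$ with $\bar f_i = t^m-(d+\mathfrak{q}_i^{s_i}\mathcal{O}_K)$. Because each $\bar f_i$ is monic, right division by $\bar f$ in $R$ can be carried out componentwise with right division by $\bar f_i$ in each factor; hence the additive subgroup $R_m$ together with the twisted multiplication $g\circ h = gh\,{\rm mod}_r\,\bar f$ splits as a direct product of the corresponding $R_{i,m}$ with their operations $g\circ_i h = gh\,{\rm mod}_r\,\bar f_i$. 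This yields
\begin{equation*}
S_{\bar f} \;\cong\; \prod_{i=1}^{t} S_{\bar f_i},
\end{equation*}
and each factor $S_{\bar f_i}$ is by definition the generalized nonassociative cyclic algebra $((\mathcal{O}_K/\mathfrak{q}_i^{s_i}\mathcal{O}_K)/(\mathcal{O}_F/\mathfrak{q}_i^{s_i}),\overline{\sigma}_i,d+\mathfrak{q}_i^{s_i})$, proving the claim.

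The main obstacle is the step verifying that the $S_f$ construction respects direct products, since $Rf$ is only a left ideal in general (the algebra is not associative) and so the isomorphism is not a priori a mere quotient of rings by a two-sided ideal. The reason this still works is precisely that $\bar f$ is monic, which makes the remainder operation ${\rm mod}_r\,\bar f$ a well-defined $S$-module retraction carried out purely by the Euclidean right division algorithm; and this algorithm, being a finite iteration of subtractions of left multiples of $\bar f$ by monomials, clearly commutes with componentwise decomposition of the coefficient ring. Once this compatibility is spelled out, the lemma follows.
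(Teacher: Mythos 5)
Your proof is correct and follows essentially the same route as the paper, which obtains the decomposition from the identification $\Lambda/\mathcal{I}\Lambda\cong S_{\overline{f}}$ together with the Chinese Remainder decomposition of $\mathcal{O}_K/\mathcal{I}\mathcal{O}_K$ and then simply declares the argument ``analogous to [OS, Lemma 4]'' in the nonassociative setting. You have merely filled in the details the paper leaves implicit, in particular the $\overline{\sigma}$-equivariance of the CRT isomorphism and the fact that right division by the monic $\overline{f}$ is carried out componentwise, so the Petit construction commutes with the product.
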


This canonically generalizes \cite[Lemma 4]{OS} to the nonassociative setting. The proof is analogous only that
here we are working with nonassociative rings
and thus  homomorphisms between nonassociative rings.
Any non-zero two-sided ideal in $\Lambda/\mathcal{I}\Lambda$ (i.e. of the form $\mathcal{J}/\mathcal{I}\Lambda$
with $\mathcal{I}\Lambda\subset \mathcal{J}$)
  corresponds to a non-zero two-sided ideal in the algebra on the right-hand side.
By classical ideal theory, every ideal of such a product of nonassociative algebras is of the form
$I_1\times\dots\times I_t$ with $I_j$ an ideal of $\mathcal{R}_j$.
W.l.o.g., it therefore suffices to look at the ideals in a generalized nonassociative cyclic algebra
$$((\mathcal{O}_K/\mathfrak{q}^{s}\mathcal{O}_K)/(\mathcal{O}_F/\mathfrak{q}^{s}),
\overline{\sigma},d+\mathfrak{q}^{s})$$
(note that $d+\mathfrak{q}^{s}=0$ is a possibility).
 Such an algebra is a nonassociative finite ring
with $m|\mathcal{O}_K/\mathfrak{q}^{s}\mathcal{O}_K|$ elements and $\mathcal{O}_F/\mathfrak{q}^{s}$ contained in its center.
This observation canonically generalizes \cite[Lemma 4]{OS} and its proof.

Let $m=efg$ with $g$ the number of primes in the factorization of $\mathfrak{q}\mathcal{O}_K$, $e$ the ramification index
and $f$ the inertial degree.  In this paper
we will only look at the unramified case, where $e=1$.

\begin{remark} \label{rem:1}
If $\bar{x}\in \Lambda/\mathcal{I}\Lambda$ is the image of $x\in \Lambda$ then we can take the matrix representing right
multiplication with $x$ and mod the entries by $\mathcal{I}\mathcal{O}_K$ and if we use the algebra for coset coding,
 a codeword
in our coset code $(\gamma(x_1),\dots,\gamma(x_L))$ is a preimage of $(\gamma(\bar{x}_1),\dots,\gamma(\bar{x}_L))$ under
$$ \pi:\bigoplus_{i=0}^{L}{\rm Mat}_m(\mathcal{O}_K) \longrightarrow \bigoplus_{i=0}^{L}{\rm Mat}_m(\mathcal{O}_K/\mathcal{I}\mathcal{O}_K),$$
respectively,  a codeword
in a linear coset code $(x_1,\dots,x_L)$ is a preimage of $(\bar{x}_1,\dots,\bar{x}_L)$ under
$$\pi:\bigoplus_{i=0}^{L}\Lambda\longrightarrow \bigoplus_{i=0}^{L}\Lambda/\mathcal{I}\Lambda.$$
\cite[Remark 3]{OS} holds analogous for our nonassociative setting, i.e. any ideal $I$ in
 $\mathcal{O}_K/\mathfrak{q}^{s}\mathcal{O}_K$ such that $\overline{\sigma}(I)=I$ yields an ideal
 $$\bigoplus_{i=0}^{m-1} It^i$$
 in the nonassociative cyclic algebra
$$((\mathcal{O}_K/\mathfrak{q}^{s}\mathcal{O}_K)/(\mathcal{O}_F/\mathfrak{q}^{s}),
\overline{\sigma},c+\mathfrak{q}^{s}).$$
\end{remark}

\subsection{Quotients of orders in algebras used for iterated space-time block codes}

Let  $D=(K/F,\rho, c)$  with $c\in \mathcal{O}_{F_0}^\times$, $A=(D, \sigma, d)$  be a division algebra as
in Section \ref{subsec:iterated}, $d\in \mathcal{O}_L^\times$ or $d\in \mathcal{O}_F^\times$,
and $\Lambda$  a natural order in $A$.

Let $\mathcal{I}$ be a non-zero two-sided ideal of $\mathcal{O}_{F_0}$, then
 $\mathcal{I}$ lies in the center of  $A=(D, \sigma, d)$ and generates the two-sided ideal
 $\mathcal{I}\Lambda$ of $\Lambda$ with
$$\mathcal{I}\Lambda\cap \mathcal{O}_{F_0}=\mathcal{I}.$$
Write
 $\mathcal{I}=\mathfrak{q}_1^{s_1}\cdots \mathfrak{q}_t^{s_t}$ for suitable prime
ideals of $\mathcal{O}_{F_0}$, and observe that then
$$\mathcal{O}_{F_0}/\mathcal{I}=\mathcal{O}_{F_0}/\mathfrak{q}_1^{s_1}\cdots\mathfrak{q}_t^{s_t}
\cong \mathcal{O}_{F_0}/\mathfrak{q}_1^{s_1}\times \cdots\times \mathcal{O}_{F_0}/\mathfrak{q}_t^{s_t},$$
\begin{equation}
\mathcal{O}_K/\mathcal{I}\mathcal{O}_K= \mathcal{O}_K/ \mathfrak{q}_1^{s_1} \cdots\mathfrak{q}_t^{s_t} \mathcal{O}_K\cong
\mathcal{O}_K/ \mathfrak{q}_t^{s_t}\mathcal{O}_K\times\dots\times \mathcal{O}_K/ \mathfrak{q}_t^{s_t} \mathcal{O}_K.
\end{equation}

\begin{theorem} \label{thm:main2}
Let
$$\overline{\sigma}(u+\mathcal{I}\mathcal{O}_K)=\sigma(u)+\mathcal{I}\mathcal{O}_K,\quad
\overline{\rho}(u+\mathcal{I}\mathcal{O}_K)=\rho(u)+\mathcal{I}\mathcal{O}_K$$
and $\bar{c}=c+\mathcal{I}\mathcal{O}_K$, $d\in \mathcal{O}_L^\times$, $\bar{d}=d+\mathcal{I}\mathcal{O}_K$.
Then
\begin{equation}\label{equ:ex5II}
\Lambda/\mathcal{I}\Lambda\cong (\mathcal{D}/\mathcal{I}\mathcal{D})[t;\overline{\sigma}]/
(\mathcal{D}/\mathcal{I}\mathcal{D})
[t;\overline{{\sigma}}]\overline{f},
\end{equation}
i.e. the right-hand side is the generalized nonassociative  cyclic algebra
$$(\mathcal{D}/\mathcal{I}\mathcal{D},\overline{\sigma} ,\bar{d})$$
over $\overline{F}_0=\mathcal{O}_{F_0}/\mathfrak{p}\mathcal{O}_{F_0}$
 with the algebra
$$\mathcal{D}/\mathcal{I}\mathcal{D} \cong
((\mathcal{O}_K/\mathcal{I}\mathcal{O}_K)/(\mathcal{O}_F/\mathcal{I}),\overline{\rho},\bar{c})$$
 decomposing into a product of generalized associative cyclic algebras by Lemma \ref{le:decomp}.
\end{theorem}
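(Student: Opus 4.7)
The plan is to read off both assertions directly from the general framework set up in Section \ref{subsec:iteratedI}, combined with the decomposition already recorded in Lemma \ref{le:decomp}, applied at two different levels: once to the outer nonassociative extension $\Lambda = \mathcal{D}[t;\sigma]/\mathcal{D}[t;\sigma](t^m - d)$, and once, separately, to the inner associative cyclic algebra $\mathcal{D} = (\mathcal{O}_K/\mathcal{O}_F,\rho,c)$.

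First I would verify that the hypotheses of Section \ref{subsec:iteratedI} are in force for the present setup (with $\delta = 0$): $F_0 = F \cap \mathrm{Fix}(\sigma)$ is a number field with ring of integers $\mathcal{O}_{F_0}$, $\sigma$ restricts to a ring endomorphism of $\mathcal{D}$ of order $m$, $\mathcal{O}_{F_0} = \mathcal{O}_F \cap \mathrm{Fix}(\sigma)$, and $\mathcal{I} \subset \mathcal{O}_{F_0}$ lies in the centre of $\Lambda$. In particular $\sigma$ stabilises $\mathcal{I}\mathcal{D}$, so the induced map $\overline{\sigma}$ on $\mathcal{D}/\mathcal{I}\mathcal{D}$ is a well-defined ring endomorphism. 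The isomorphism (\ref{eq:relevantII}) applied to $f = t^m - d$ then immediately gives
\[ \Lambda/\mathcal{I}\Lambda \;\cong\; (\mathcal{D}/\mathcal{I}\mathcal{D})[t;\overline{\sigma}]\big/(\mathcal{D}/\mathcal{I}\mathcal{D})[t;\overline{\sigma}]\,\overline{f}, \]
and by the very definition of a generalized nonassociative cyclic algebra, with $\overline{f} = t^m - \bar{d}$, the right-hand side is $(\mathcal{D}/\mathcal{I}\mathcal{D}, \overline{\sigma}, \bar{d})$ as an algebra over $\overline{F}_0 = \mathcal{O}_{F_0}/\mathcal{I}$.

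Second, I would analyse $\mathcal{D}/\mathcal{I}\mathcal{D}$ itself. The key observation is that $\mathcal{I}\mathcal{D} = (\mathcal{I}\mathcal{O}_F)\mathcal{D}$, because $\mathcal{O}_F$ sits inside the centre of $\mathcal{D}$. So this quotient falls into the purely associative setting covered by Theorem \ref{thm:main} applied to $\mathcal{D} = (\mathcal{O}_K/\mathcal{O}_F, \rho, c)$ with the $\mathcal{O}_F$-ideal $\mathcal{I}\mathcal{O}_F$, yielding
\[ \mathcal{D}/\mathcal{I}\mathcal{D} \;\cong\; \bigl((\mathcal{O}_K/\mathcal{I}\mathcal{O}_K)\big/(\mathcal{O}_F/\mathcal{I}\mathcal{O}_F),\, \overline{\rho},\, \bar{c}\bigr), \]
which is the stated associative cyclic algebra. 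To obtain the decomposition one factors $\mathcal{I}\mathcal{O}_F = \mathfrak{Q}_1^{r_1}\cdots \mathfrak{Q}_u^{r_u}$ in the Dedekind domain $\mathcal{O}_F$, invokes the Chinese Remainder Theorem on $\mathcal{O}_F/\mathcal{I}\mathcal{O}_F$ and $\mathcal{O}_K/\mathcal{I}\mathcal{O}_K$, and then applies Lemma \ref{le:decomp} in its associative version to rewrite $\mathcal{D}/\mathcal{I}\mathcal{D}$ as a product of generalized associative cyclic algebras, one for each prime $\mathfrak{Q}_j$.

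The step requiring the most care is the bookkeeping between the three rings of integers $\mathcal{O}_{F_0} \subset \mathcal{O}_F \subset \mathcal{O}_K$: one must distinguish the ideal $\mathcal{I}$ of $\mathcal{O}_{F_0}$, its extension $\mathcal{I}\mathcal{O}_F$ (which governs the decomposition of $\mathcal{D}/\mathcal{I}\mathcal{D}$), and its further extension $\mathcal{I}\mathcal{O}_K$ (which controls the entries in the matrix representation of Section \ref{subsec:iterated}). One must also check that $\overline{\sigma}$ respects the product decomposition induced on $\mathcal{O}_K/\mathcal{I}\mathcal{O}_K$ by the primes $\mathfrak{Q}_j$, which follows from $\sigma$ fixing $\mathcal{I} \subset \mathcal{O}_{F_0}$ and commuting with $\rho$. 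Once these identifications are made, nothing beyond the Chinese Remainder Theorem, the definition of a generalized nonassociative cyclic algebra, and the already-established isomorphism (\ref{eq:relevantII}) is required.
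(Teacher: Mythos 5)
Your proposal is correct and follows essentially the same route as the paper: the isomorphism (\ref{eq:relevantII}) applied to $\overline{f}=t^m-\bar{d}$ gives the identification of $\Lambda/\mathcal{I}\Lambda$ with $(\mathcal{D}/\mathcal{I}\mathcal{D},\overline{\sigma},\bar{d})$, and the decomposition of $\mathcal{D}/\mathcal{I}\mathcal{D}$ into a product of generalized associative cyclic algebras is exactly the appeal to Lemma \ref{le:decomp}. Your extra care in distinguishing $\mathcal{I}$, $\mathcal{I}\mathcal{O}_F$ and $\mathcal{I}\mathcal{O}_K$ is a useful elaboration but does not change the argument.
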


\begin{proof} Equation (\ref{eq:relevantII}) yields the isomorphism
$\Lambda/\mathcal{I}\Lambda\cong
(\mathcal{D}/\mathcal{I}\mathcal{D})[t;\overline{\sigma},\overline{\delta}]/(\mathcal{D}/\mathcal{I}\mathcal{D})
[t;\overline{\sigma},\overline{\delta}]\overline{f},$ $ g+\mathcal{I}\Lambda \mapsto \overline{g}.$
Moreover,
$\mathcal{D}/\mathcal{I}\mathcal{D}$ decomposes into a product of generalized associative cyclic algebras as described in
 Lemma \ref{le:decomp}.
\end{proof}

Let
$\overline{f}(t)=t^n-\overline{d}\in \mathcal{D}[t;\overline{\sigma}]$, $d\in \mathcal{O}_L^\times$
or $d\in \mathcal{O}_F^\times$ and
$$D_j=((\mathcal{O}_K/\mathfrak{q}_j^{s_j}\mathcal{O}_K)/(\mathcal{O}_F/\mathfrak{q}_j^{s_j}),\overline{\rho},
c+\mathfrak{q}_j^{s_j})$$ for $1\leq j\leq t$,
where the respective maps $\overline{\sigma}$, $\overline{\rho}$ are canonically defined via
$$\overline{\sigma}(u+\mathfrak{q}_j^{s_j}\mathcal{O}_K)=\sigma(u)+\mathfrak{q}_j^{s_j}\mathcal{O}_K,$$
$$\overline{\rho}(u+\mathfrak{q}_j^{s_j}\mathcal{O}_K)=\rho(u)+\mathfrak{q}_j^{s_j}\mathcal{O}_K$$
(note that here and later we omit the index $j$ and just write $\overline{\sigma}$, $\overline{\rho}$ for better readability).
We now get the following isomorphism of algebras:

\begin{theorem}\label{prop:righthandside}
In the situation of Theorem \ref{thm:main2},  if
\[
\mathcal{D}/\mathcal{I}\mathcal{D}\cong D_1\times \dots\times D_l
\]
is a product of generalized associative cyclic algebras, then the nonassociative ring $\Lambda/\mathcal{I}\Lambda$ can be described as
\[
\Lambda/\mathcal{I}\Lambda\cong
D_1[t;\overline{\sigma}]/D_1[t;\overline{\sigma}](t^n-\overline{d})
\times \dots\times
D_l[t;\overline{\sigma}]/D_l[t;\overline{\sigma}](t^n-\overline{d})
\]
or alternatively, as
\begin{equation}\label{equ:general}
\Lambda/\mathcal{I}\Lambda\cong (D_1,\overline{\sigma},d+\mathfrak{q}_1^{s_1})
\times \dots\times (D_l,\overline{\sigma},d+\mathfrak{q}_l^{s_t}).
\end{equation}
\end{theorem}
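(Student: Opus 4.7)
The plan is to start from the isomorphism
$$\Lambda/\mathcal{I}\Lambda \;\cong\; (\mathcal{D}/\mathcal{I}\mathcal{D})[t;\overline{\sigma}]/(\mathcal{D}/\mathcal{I}\mathcal{D})[t;\overline{\sigma}]\,\overline{f}$$
already established in Theorem \ref{thm:main2}, and to transport the product decomposition $\mathcal{D}/\mathcal{I}\mathcal{D} \cong D_1 \times \dots \times D_l$ (given by the associative analogue of Lemma \ref{le:decomp}) through the Petit construction. First I would verify that $\overline{\sigma}$ respects the direct product decomposition. Since each $\mathfrak{q}_j^{s_j}$ lies in $\mathcal{O}_{F_0} \subset {\rm Fix}(\sigma)$, one has $\sigma(\mathfrak{q}_j^{s_j}\mathcal{D}) \subset \mathfrak{q}_j^{s_j}\mathcal{D}$, so $\overline{\sigma}$ descends to an automorphism on each factor $D_j$ and acts diagonally on the product.

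Next I would establish the natural ring isomorphism
$$(D_1\times\cdots\times D_l)[t;\overline{\sigma}] \;\longrightarrow\; D_1[t;\overline{\sigma}]\times\cdots\times D_l[t;\overline{\sigma}],$$
sending $\sum_i (a_i^{(1)},\dots,a_i^{(l)})\,t^i$ to $\bigl(\sum_i a_i^{(1)}t^i,\;\dots,\;\sum_i a_i^{(l)}t^i\bigr)$. This is a routine verification: both sides carry the twisting rule $ta=\overline{\sigma}(a)t$, and because $\overline{\sigma}$ acts componentwise, the products match factor by factor.

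The main step, and the one that actually uses the nonassociativity, is to show that this associative ring isomorphism descends to an isomorphism of the Petit quotients by $\overline{f} = t^n - \overline{d}$. Since $\overline{f}$ is monic, right division by $\overline{f}$ is carried out by the usual Euclidean algorithm; at each step one only applies iterates of $\overline{\sigma}$ and left multiplication by $\overline{d}$, both of which act componentwise on $D_1 \times \cdots \times D_l$. Hence the remainder ${\rm mod}_r \overline{f}$ in the product skew polynomial ring is exactly the tuple of remainders in the individual factors, and the Petit product $\circ$ transports to the componentwise Petit product. This is the only point where one needs to be careful, but once unwound it is straightforward.

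Combining these observations yields
$$\Lambda/\mathcal{I}\Lambda \;\cong\; \prod_{j=1}^{l} D_j[t;\overline{\sigma}]/D_j[t;\overline{\sigma}](t^n-\overline{d}),$$
which is the first asserted isomorphism. The second form is then just a relabeling: in the $j$-th factor $\overline{d}$ reduces to $d+\mathfrak{q}_j^{s_j}$, so by definition
$$D_j[t;\overline{\sigma}]/D_j[t;\overline{\sigma}](t^n-\overline{d}) \;=\; (D_j,\overline{\sigma},d+\mathfrak{q}_j^{s_j}),$$
where if $d+\mathfrak{q}_j^{s_j}=0$ the factor is interpreted as the corresponding degenerate Petit algebra; the formalism is unaffected. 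No further ingredients are needed.
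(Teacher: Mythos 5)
Your proposal is correct and follows essentially the same route as the paper, which states this theorem without a separate proof as an immediate consequence of Theorem \ref{thm:main2} and Lemma \ref{le:decomp}. You simply make explicit the (routine but worth recording) verifications that $\overline{\sigma}$ and right division by the monic $\overline{f}$ act componentwise on $D_1\times\cdots\times D_l$, so that the Petit construction distributes over the direct product.
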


Hence the nonassociative ring $\Lambda/\mathcal{I}\Lambda$ decomposes into a product of
generalized nonassociative cyclic algebras
$$\mathcal{R}_j=(D_j,\overline{\sigma},d+\mathfrak{q}_j^{s_j}).$$
These are finite nonassociative rings with
$$nm|\mathcal{O}_K/\mathfrak{q}_j^{s_j}\mathcal{O}_K|$$
elements.

Any non-zero two-sided ideal in $\Lambda/\mathcal{I}\Lambda$ (i.e. of the form $\mathcal{J}/\mathcal{I}\Lambda$
with $\mathcal{I}\Lambda\subset \mathcal{J}$)
  corresponds to a non-zero two-sided ideal in the algebra on the right-hand side of (\ref{equ:general}) in Theorem \ref{prop:righthandside}.

Every ideal of such a product of nonassociative algebras is of the form
$I_1\times\dots\times I_t$ with $I_j$ an ideal of $\mathcal{R}_j$.
W.l.o.g., it thus suffices to look at the ideals in a generalized nonassociative cyclic algebra
$$ (D_s,\overline{\sigma},d+\mathfrak{q}^{s})=D_s[t;\overline{\sigma}]/D_s[t;\overline{\sigma}]
(t^m-\overline{d}).$$
with $f=t^m-\overline{d}=t^m-d+\mathfrak{q}^{s} \in (\mathcal{O}_K/\mathfrak{q}^{s}\mathcal{O}_K)[t;\overline{\sigma}]$,
$m$ the order of $\overline{\sigma}$.
This observation canonically generalizes \cite[Lemma 4]{OS} to the nonassociative setting.
Let $mn=efg$ with $g$ being the number of primes in the factorization of $\mathfrak{q}\mathcal{O}_K$, $e$ the ramification index
and $f$ the inertial degree.

We again only look at the unramified case, where $e=1$.

\begin{remark}
Analogously as described in Remark \ref{rem:1},
 if $\bar{x}\in \Lambda/\mathcal{I}\Lambda$ is the image of $x\in \Lambda$ then we can take the matrix representing right
multiplication with $x$ and mod the entries by $\mathcal{I}\mathcal{O}_K$ and if we use the algebra for coset coding, a
codeword in our space-time block coset code $(M(x_1),\dots,M(x_L))$ is a preimage of $(M(\bar{x}_1),\dots,M(\bar{x}_L))$ under
$$ \pi:\bigoplus_{i=0}^L {\rm Mat}_{nm}(\mathcal{O}_K) \longrightarrow \bigoplus_{i=0}^L {\rm Mat}_{nm}(\mathcal{O}_K/\mathcal{I}\mathcal{O}_K),$$
respectively,  a codeword
in our linear coset code $(x_1,\dots,x_L)$ is a preimage of $(\bar{x}_1,\dots,\bar{x}_L)$ under
$$ \pi:\bigoplus_{i=0}^L\Lambda\longrightarrow \bigoplus_{i=0}^L \Lambda/\mathcal{I}\Lambda.$$
Any ideal $I$ in
 $\mathcal{O}_K/\mathfrak{q}^{s}\mathcal{O}_K$ such that $\overline{\sigma}(I)=I$ and $\overline{\rho}(I)=I$
 yields an ideal
 $$I\oplus Ie \oplus \dots\oplus  Ie^{n-1}\oplus \dots\oplus Ie^{n-1}t^{m-1}$$
(we denote the canonical basis of any $D_i$ by $1,e,\dots,e^{n-1}$ for ease of notation) in the algebra
 $$(D_i,\overline{\sigma},d+\mathfrak{q}^{s})=D_i[t;\overline{\sigma}]/D_i[t;\overline{\sigma}]
(t^m-\overline{d}).$$
\end{remark}

We note that \cite[(11)]{OS} also holds for the codes obtained in our nonassociative setting: For $X_i=M(x_i)$ and $\mathcal{J}=(\alpha)$,
$\alpha\in \mathcal{O}_{F_0}$, we obtain a lower bound for the minimum determinant
$\Delta_{min}$ of $\overline{\mathcal{C}}$:
\begin{equation} \label{equ:key}
\Delta_{min}\geq {\rm min}_{0\not=X_i}|\det(X_i)|^2 {\rm min}(d_H(\overline{\mathcal{C}})^2,|\alpha|^{2n} ),
\end{equation}
where $d_H(\overline{\mathcal{C}})$ is the Hamming distance of $\overline{\mathcal{C}}$.

\subsection{Codes with prescribed minimum distance}
The construction mentioned in \cite[Example 5]{OS} works here as well and designs
a code $\mathcal{C}$ such that $\overline{\mathcal{C}}$ has prescribed minimum distance:
suppose $\overline{K}=\mathcal{O}_K/\mathcal{I}\mathcal{O}_K$ is a field.
Choose a code $\mathcal{B}$ of desired minimum distance $d_H$ and length $L$ over $\overline{K}$ and incorporate
the entries of the
code into the first summand of the right hand side of a generalized nonassociative cyclic algebra as in (\ref{equ:ex5}):
$$\Lambda/\mathcal{I}\Lambda\cong
((\mathcal{O}_K/\mathcal{I}\mathcal{O}_K)/(\mathcal{O}_F/\mathcal{I}),\overline{\sigma},\bar{d}))
=\bigoplus_{i=0}^{m-1}(\mathcal{O}_K/\mathcal{I}\mathcal{O}_K)t^i,
$$
or into the first summand of the right hand side of a generalized nonassociative cyclic algebra as in (\ref{equ:ex5II}):
$$\Lambda/\mathcal{I}\Lambda\cong (\mathcal{D}/\mathcal{I}\mathcal{D},\overline{\sigma} ,\bar{d})
\cong \bigoplus_{i,j=0}^{i=m-1,j=n-1}(\mathcal{O}_K/\mathcal{I}\mathcal{O}_K)e^it^j$$
(with $1,e,\dots,e^{m-1}$ denoting the canonical basis of $\mathcal{D}/\mathcal{I}\mathcal{D}$).

The matrices representing the right multiplication in
$\Lambda/\mathcal{I}\Lambda$
form a subset of ${\rm Mat}_{m}(\overline{K})$
(respectively, of ${\rm Mat}_{mn}(\overline{K})$) and are obtained by
taking the  matrices representing the right multiplication in $\Lambda$ and then
 modding out
the entries of each matrix  by $\mathcal{I}\mathcal{O}_K$.
Choose as outer code $\overline{\mathcal{C}}$ the $L$-tuples of matrices coming from $\Lambda/\mathcal{I}\Lambda$ such that
  $(x_{1,0},\dots,x_{L,0})$ belong to $\mathcal{B}$ as explained in \cite[Example 5]{OS}, since the fact that we might
   be dealing also with nonassociative algebras here is not relevant in the argument.

%
%

 \section{The inertial case, where $g=e=1$ and $\mathcal{I}=\mathfrak{q}$}\label{sec:inertial}

 \subsection{Nonassociative cyclic division algebras}
In the terminology of Section \ref{ex:nca1}, let $g=e=1$.
Then $\mathcal{I}=\mathfrak{q}\subset \mathcal{O}_F$ remains a prime ideal in $\mathcal{O}_K$ and
$\mathfrak{q}\mathcal{O}_K= \mathcal{Q}$ for a prime $\mathcal{Q}$ of $\mathcal{O}_K$
with inertial degree $f=m$. The finite field $\overline{K}=\mathcal{O}_K/\mathfrak{q}\mathcal{O}_K=\mathcal{O}_K/ \mathcal{Q}$
is a cyclic Galois field extension  of degree $m$ of  $\overline{F}=\mathcal{O}_F/\mathfrak{q}$ with
${\rm Gal}(\overline{K}/\overline{F})=\langle \overline{\sigma}\rangle$.

 Let $A=(K/F,\sigma,c)$  be a nonassociative cyclic division algebra of degree $m$,
 $c\in \mathcal{O}_K\setminus \mathcal{O}_F$, with natural order $\Lambda$
 and
$$\Lambda/\mathcal{I}\Lambda\cong (\overline{K}/\overline{F},\overline{\sigma},\overline{c})$$
with $\overline{c}=c+\mathfrak{q}$.
 Since $c\not\in  \mathcal{O}_F$ it is clear that  $c\not\in \mathfrak{q}$.

\begin{theorem}\label{thm:inertial}
Let $\mathcal{I}=\mathfrak{q}$ be a prime ideal in $\mathcal{O}_F$ which is inert in $\mathcal{O}_K$, and
$\mathfrak{q}\mathcal{O}_K= \mathcal{Q}$, $\mathcal{Q}$ a prime ideal in $\mathcal{O}_K$.
Then
$$\Lambda/\mathcal{I}\Lambda\cong(\overline{K}/\overline{F},\overline{\sigma},\overline{c})$$
 is a  nonassociative cyclic
 algebra of degree $m$ over the finite field $\overline{F}$.
If $m$ is prime or if $1,\bar{c},\dots,\bar{c}^{m-1}$ are linearly independent over $\overline{F}$, then this is a
central simple division
algebra and hence
 the only proper two-sided ideal $\mathcal{J}$ of $\Lambda$ that contains $\mathcal{I}=\mathfrak{q}$ is
$$\mathcal{I}\Lambda=\bigoplus_{j=0}^{m-1}\mathfrak{q}\mathcal{O}_Kt^j.$$
\end{theorem}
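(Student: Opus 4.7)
The plan is to chain three standard facts: (i) the isomorphism claim is immediate from Theorem \ref{thm:main}; (ii) the division-algebra property reduces, via the Petit criterion recalled in Section \ref{sec:S_f}, to irreducibility of $\overline{f}(t)=t^m-\bar{c}$ in the associative skew polynomial ring $\overline{K}[t;\overline{\sigma}]$; and (iii) once division-algebrahood is in hand, the standard ideal-correspondence argument yields the uniqueness statement.

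For step (i), inertness of $\mathfrak{q}$ gives that $\overline{K}:=\mathcal{O}_K/\mathcal{Q}$ is a finite cyclic Galois extension of $\overline{F}:=\mathcal{O}_F/\mathfrak{q}$ of degree equal to the inertial degree $m$, with Galois group generated by $\overline{\sigma}$; plugging into (\ref{equ:ex5}) yields $\Lambda/\mathcal{I}\Lambda\cong(\overline{K}/\overline{F},\overline{\sigma},\bar{c})$, which is a nonassociative cyclic algebra of degree $m$ over the finite field $\overline{F}$. For step (ii), \cite[(9)]{P66} (recalled in Section \ref{sec:S_f}) says that, since $\overline{K}$ is a field and $S_{\overline{f}}$ is finite-dimensional over its center, $S_{\overline{f}}$ is a division algebra iff $\overline{f}$ is irreducible in $\overline{K}[t;\overline{\sigma}]$. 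I would then invoke the standard factorization theory of skew polynomials of the form $t^m-c$: a proper factorization produces some $z\in\overline{K}^\times$ and a divisor $k$ of $m$ with $0<k<m$ such that $\bar{c}=z\,\overline{\sigma}^k(z)\cdots\overline{\sigma}^{(m/k-1)k}(z)$, which forces $\bar{c}\in{\rm Fix}(\overline{\sigma}^k)\subsetneq\overline{K}$. Under the linear-independence hypothesis, $\overline{F}(\bar{c})=\overline{K}$, so $\bar{c}$ lies in no proper intermediate field and no such factorization can exist. When $m$ is prime only $k=1$ is available, placing $\bar{c}\in\overline{F}$; but then the associative cyclic algebra $(\overline{K}/\overline{F},\overline{\sigma},\bar{c})$ would be a noncommutative finite division algebra of degree $m>1$, contradicting Wedderburn's theorem. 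In either case $\overline{f}$ is irreducible, and the quotient is a (central simple) division algebra, with centrality of $\overline{F}$ following from the general fact recalled in Section \ref{sec:S_f}.

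For step (iii), division algebras are simple by Section \ref{sec:1}, so $\Lambda/\mathcal{I}\Lambda$ has no non-trivial two-sided ideals. By the standard correspondence, proper two-sided ideals of $\Lambda$ containing $\mathcal{I}\Lambda=\mathfrak{q}\Lambda$ biject with proper two-sided ideals of the quotient, of which the zero ideal is the only one; hence $\mathcal{I}\Lambda$ is the unique such proper ideal. The explicit description $\mathcal{I}\Lambda=\bigoplus_{j=0}^{m-1}\mathfrak{q}\mathcal{O}_Kt^j$ reads off from (\ref{equ:quotient}). The main technical obstacle I expect is cleanly invoking the skew-polynomial factorization criterion for $t^m-\bar{c}$ over a cyclic extension of finite fields, together with the auxiliary use of Wedderburn's theorem in the $m$-prime case to eliminate the possibility $\bar{c}\in\overline{F}$.
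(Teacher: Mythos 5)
Your steps (i) and (iii) are correct and coincide with what the paper does: the isomorphism is read off from Theorem \ref{thm:main} using inertness of $\mathfrak{q}$, and once the quotient is known to be a division algebra, simplicity plus the ideal correspondence gives uniqueness of $\mathcal{I}\Lambda$, with the explicit description coming from (\ref{equ:quotient}). Where you diverge is step (ii): the paper disposes of the division-algebra claim in one line by citing \cite{S12}, whereas you try to re-derive that result through Petit's irreducibility criterion and the factorization theory of $t^m-\bar{c}$. For the linear-independence branch this is a legitimate (if heavier) route --- it is essentially the proof of the cited theorem --- but note that the direction of the factorization criterion you need (reducibility of $t^m-\bar c$ forces $\bar c=z\,\overline{\sigma}^k(z)\cdots\overline{\sigma}^{(m/k-1)k}(z)$ for some proper divisor $k$ of $m$) is the nontrivial one; the easy computation only shows that such a partial norm condition produces a right factor $t^k-z$. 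You would need to prove or precisely cite that converse rather than call it standard.

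The genuine gap is in your $m$-prime branch. Having deduced from a hypothetical factorization that $\bar c=N_{\overline K/\overline F}(z)\in\overline F$, you claim a contradiction because $(\overline K/\overline F,\overline\sigma,\bar c)$ ``would be a noncommutative finite division algebra,'' violating Wedderburn. But nothing forces it to be a division algebra at that point --- that is exactly what you are trying to prove, so the argument is circular. Worse, Wedderburn (equivalently, triviality of the Brauer group of a finite field) points the other way: if $\bar c\in\overline F^\times$ then $(\overline K/\overline F,\overline\sigma,\bar c)\cong{\rm Mat}_m(\overline F)$ is split, $t^m-\bar c$ really is reducible (the norm map of an extension of finite fields is surjective), and there is no contradiction to be had. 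What your computation actually shows is that the $m$-prime case needs the extra hypothesis $\bar c\notin\overline F$, which does not follow from $c\in\mathcal{O}_K\setminus\mathcal{O}_F$ alone, since reduction modulo $\mathcal{Q}$ can send such a $c$ into $\overline F$. The paper's proof avoids confronting this by invoking \cite{S12} directly, whose division-algebra criterion for prime degree presupposes $\bar c\in\overline K\setminus\overline F$; if you want a self-contained argument you must either add that assumption explicitly or close this case some other way --- Wedderburn cannot do it.
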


\begin{proof}
Since $c\not\in \mathfrak{q}$, $\bar{c}\in \overline{F}^\times$,
$\Lambda/\mathcal{I}\Lambda\cong(\overline{K}/\overline{F},\overline{\sigma},\overline{c})$ is a nonassociative cyclic
 algebra of degree $m$ over $\overline{F}$. If $m$ is a prime or if $1,\bar{c},\dots,\bar{c}^{m-1}$ are linearly independent over $\overline{F}$,
 it is a division algebra \cite{S12} and therefore
has only  trivial two-sided ideals.
Thus the only proper two-sided ideal $\mathcal{J}$ of $\Lambda$ that contains $\mathcal{I}$ is $\mathcal{I}\Lambda=
\bigoplus_{j=0}^{m-1}\mathfrak{q}\mathcal{O}_Kt^j$.
\end{proof}

\begin{example}
Let $\omega_7$ be a primitive 7th root of unity, $K=\mathbb{Q}(\omega_3,\omega_7+\omega_7^{-1})$, $F=\mathbb{Q}(\omega_3),$ and let
$A=(\mathbb{Q}(\omega_3,\omega_7+\omega_7^{-1})/\mathbb{Q}(\omega_3),\sigma,c)$ with
$c\in \mathcal{O}_K\setminus \mathcal{O}_F$ be a nonassociative cyclic division algebra of degree 3. Then
$$\Lambda=\mathbb{Z}[\omega_3,\omega_7+\omega_7^{-1}]\oplus \mathbb{Z}[\omega_3,\omega_7+\omega_7^{-1}] t\oplus \mathbb{Z}[
\omega_3,\omega_7+\omega_7^{-1}]t^2$$
is the natural order in $A$. Let $\mathcal{I}=\langle 2\rangle$, which is a prime ideal in $\mathcal{O}_F=
\mathbb{Z}[\omega_3]$, then
this ideal remains prime in $\mathcal{O}_K=\mathbb{Z}[\omega_3,\omega_7+\omega_7^{-1}]$ and
$\mathbb{Z}[i]/\mathcal{I}\cong\mathbb{F}_4$.  Since $\mathcal{I}$ is inert in $\mathbb{Q}(\omega_3,\omega_7+\omega_7^{-1})$, we have
 by Theorem \ref{thm:inertial},
$$\Lambda/\mathcal{I}\Lambda\cong(\mathbb{F}_{64}/\mathbb{F}_4,\overline{\sigma},\overline{c})$$
 is a nonassociative cyclic division
 algebra of degree $3$ over $\mathbb{F}_4$ for all $\overline{c}\not=0$. It follows that $\mathcal{J}=\langle 2\rangle\Lambda$
 is  the only proper two-sided ideal
  of $\Lambda$ that contains $\langle 2\rangle\Lambda$ and
  $\Lambda/\mathcal{J}\cong (\mathbb{F}_{64}/\mathbb{F}_4,\overline{\sigma},\overline{c})$.
\end{example}

\begin{example}
Let $\omega_{15}$ be a primitive 15th root of unity, $K=\mathbb{Q}(i,\omega_{15}+\omega_{15}^{-1})$,
$F=\mathbb{Q}(i),$ and let
$D=(\mathbb{Q}(i,\omega_{15}+\omega_{15}^{-1})/\mathbb{Q}(i),\sigma,c)$ with
$c\in \mathcal{O}_K\setminus \mathcal{O}_F$ be a nonassociative cyclic division algebra of degree 4 (i.e., we choose $c$
such that $1,c,c^2,c^3$ are linearly independent). Then
$$\Lambda=\mathbb{Z}[i,\omega_{15}+\omega_{15}^{-1}]\oplus \mathbb{Z}[i,\omega_{15}+\omega_{15}^{-1}] t\oplus
\mathbb{Z}[i,\omega_{15}+\omega_{15}^{-1}]t^2\oplus \mathbb{Z}[i,\omega_{15}+\omega_{15}^{-1}]t^3$$
is the natural order in $D$.

Let
$\langle 2\rangle\subset \mathbb{Z}$, which is the square of the  ideal $\langle 1+i\rangle$ in
$\mathcal{O}_F=\mathbb{Z}[i]$. That is, $\langle 2\rangle$ is totally ramified in $\mathbb{Q}(i)$ and
$\mathbb{Z}[i]/\langle 1+i\rangle\cong\mathbb{F}_2$.
Let $\mathcal{I}=\mathfrak{q}=\langle 1+i\rangle$. Then $\langle 1+i\rangle$ is unramified in
$\mathbb{Q}(i,\omega_{15}+\omega_{15}^{-1})$. By Theorem \ref{thm:inertial}, we have
$$\Lambda/\mathcal{I}\Lambda\cong(\mathbb{F}_{16}/\mathbb{F}_2,\overline{\sigma},\overline{c})$$
 is a nonassociative cyclic
 algebra of degree $4$ over $\mathbb{F}_4$ which is never a division algebra, since $1,c,c^2,c^3$ are always
 linearly dependent over $\mathbb{F}_2$. Hence
$$\Lambda/\mathcal{I}\Lambda\cong(\mathbb{F}_{16}/\mathbb{F}_2,\overline{\sigma},\overline{c})$$
is a nonassociative cyclic algebra with zero divisors and $f=t^4-\overline{c}$ is reducible in $\mathbb{F}_{16}[t;\overline{\sigma}]$.
\end{example}

\begin{remark}
In the nonassociative case,  it is very easy to make sure
the algebra employed is division.
Space-time block codes designed using cyclic division algebras which are not associative are fully diverse, however,
a non-vanishing determinant cannot be achieved in most cases. For scenarios like
the multiple-input double-output code design, it can be worth trading the non-vanishing
determinant for fast-decodability, however \cite{SPO12}.
\end{remark}

\begin{example}\label{ex:inert}
Let $K=\mathbb{Q}(i,\sqrt{5})$, $F=\mathbb{Q}(i)$, $D=(\mathbb{Q}(i,\sqrt{5})/\mathbb{Q}(i),\sigma,c)$ with $c\in \mathcal{O}_K\setminus \mathcal{O}_F$ be a
nonassociative quaternion algebra. The automorphism $\sigma:K
\rightarrow K$ is defined by $\sigma(i) = -i$. Then
$$\Lambda=\mathbb{Z}[i,(1+\sqrt{5})/2]\oplus \mathbb{Z}[i,(1+\sqrt{5})/2] t$$
is the natural order in $D$. Let $\mathcal{I}=\mathfrak{q}=\langle 1+i\rangle\subset\mathbb{Z}[i]$, then
$\mathbb{Z}[i]/\mathcal{I}\cong\mathbb{F}_2$. Since $\mathcal{I}$ is inert in $\mathbb{Q}(i,\sqrt{5})$, we have
$$\Lambda/\mathcal{I}\Lambda\cong(\mathbb{F}_4/\mathbb{F}_2,\overline{\sigma},\overline{c})$$
 is a nonassociative quaternion
 algebra over $\mathbb{F}_2$  for all $\overline{c}\not=0$. It follows that $\mathcal{J}=\langle 1+i\rangle\Lambda$
 is  the only proper two-sided ideal
  of $\Lambda$ that contains $\langle 1+i\rangle\Lambda$ and
  $\Lambda/\mathcal{J}\cong (\mathbb{F}_4/\mathbb{F}_2,\overline{\sigma},\overline{c})$.

As in \cite[Example 4]{OS},
 we can choose the coset code $\mathcal{C}'=\{(\gamma(x_0),\gamma(x_1),\gamma(x_2))\in {\rm Mat}_2(\mathcal{O}_K)\,|\, \gamma(x_2)=
 \gamma(x_0)+\gamma(x_1)\}$ as preimage from
the space-time codeword
$$(\gamma(\overline{x_0}),\gamma(\overline{x_1})\gamma(\overline{x_0})+\gamma(\overline{x_1})).$$
Recall that a coset code constructed from this nonassociative cyclic algebra however would  not have non-vanishing determinant.
\end{example}

 \subsection{Generalized nonassociative cyclic division algebras}

In the terminology of Section \ref{subsec:iterated}, let $g=e=1$.
Then $\mathcal{I}=\mathfrak{q}\subset \mathcal{O}_{F_0}$ remains a prime ideal in $\mathcal{O}_K$ and
$\mathfrak{q}\mathcal{O}_K= \mathcal{Q}$ for a prime $\mathcal{Q}$ of $\mathcal{O}_K$
with inertial degree $f=m$. The finite field $\overline{K}=\mathcal{O}_K/\mathfrak{q}\mathcal{O}_K=\mathcal{O}_K/ \mathcal{Q}$
is a cyclic Galois field extension  of degree $m$ of  $\overline{F}=\mathcal{O}_F/\mathfrak{q}$ with
${\rm Gal}(\overline{K}/\overline{F})=\langle \overline{\rho}\rangle$.
The finite field $\overline{K}=\mathcal{O}_K/\mathfrak{q}\mathcal{O}_K
=\mathcal{O}_K/ \mathcal{Q}$
is a cyclic Galois field extension of the field $\overline{L}=\mathcal{O}_L/\mathfrak{q}$ of degree $n$ with
${\rm Gal}(\overline{K}/\overline{F})=\langle \overline{\sigma}\rangle$.

Let $A=(D, \sigma, d)$ with  $D=(K/F,\rho, c),$
 $c\in \mathcal{O}_{F_0}^\times$, $d\in \mathcal{O}_L^\times$ or $d\in \mathcal{O}_F^\times$,
 be a division algebra as in Section \ref{subsec:iterated}, and $\Lambda$ be a natural order in $A$.

From Proposition \ref{prop:righthandside} we know that
\[
\mathcal{D}/\mathcal{I}\mathcal{D}\cong D_1\times \dots\times D_t
\]
is a product of generalized associative cyclic algebras and
\[
\Lambda/\mathcal{I}\Lambda\cong
D_1[t;\overline{\sigma}]/D_1[t;\overline{\sigma}](t^n-\overline{d})
\times \dots\times
D_l[t;\overline{\sigma}]/D_l[t;\overline{\sigma}](t^n-\overline{d}),
\]
i.e.,
\begin{equation}\label{equ:importantII}
\Lambda/\mathcal{I}\Lambda\cong (D_1,\overline{\sigma},d+\mathfrak{q}_1^{s_1})
\times \dots\times (D_l,\overline{\sigma},d+\mathfrak{q}_l^{s_l}).
\end{equation}

Here, the
$$D_i\cong(\overline{K}/\overline{F},\overline{\rho},\overline{c}), \quad 1\leq i\leq l,$$
 are generalized associative cyclic algebras of degree $n$ over the finite field $\overline{F}$.
By \cite[Proposition 1, Proposition 2]{OS},  if $c\not\in \mathfrak{q}$ then
$$D_i\cong(\overline{K}/\overline{F},\overline{\rho},\overline{c})\cong {\rm Mat}_n(\overline{F}),$$
and  the only proper two-sided ideal $\mathcal{J}_i$ in the natural order of $D_i$  that contains $\mathcal{I}=\mathfrak{q}$ is
$$\bigoplus_{j=0}^{n-1}\mathfrak{q}\mathcal{O}_Ke^j.$$
If  $c\in \mathfrak{q}$ then
$$D_i\cong(\overline{K}/\overline{F},\overline{\rho},0)\cong \overline{K}[t;\overline{\rho}]/(e^n),$$
and the only two-sided ideals $\mathcal{J}_i$
 in the natural order of $D_i$  that contain $\mathcal{I}=\mathfrak{q}$ are the ideals
$(e^j)$, $1\leq j\leq n-1.$
The two-sided ideals of $\Lambda/\mathcal{I}$ thus have the form $\mathcal{J}_1\times\dots \times \mathcal{J}_l$
with the $\mathcal{J}_k$ of the corresponding type.

\begin{example}
Let $\omega=\omega_3$ denote the primitive third root of unity, $\theta = \omega_7 + \omega_7^{-1} = 2
\cos(\frac{2 \pi}{7})$ where $\omega_7$ is a primitive $7^{th}$ root of
unity and let $F = \mathbb{Q}(\theta)$.

 Let $K = F(\omega) =\mathbb{Q}(\omega, \theta)$ and take
 the quaternion division algebra $D = (K/F, \sigma, -1)$, where $\sigma:i \mapsto -i$.
In particular, this means $\sigma(\omega) = \omega^2$. Let $L =\mathbb{Q}(\omega)$ so that $K/L$ is a cubic cyclic field extension
whose Galois group is generated by the automorphism
$$\tau: \omega_7 +\omega_7^{-1} \mapsto \omega_7^2 + \omega_7^{-2}.$$
Note that $\omega\in\mathcal{O}_L=\mathbb{Z}[\omega]$.
The algebra $A = (D, \tau, \omega) $  is used in the codes employed in \cite{R13} (cf. \cite{SP14}).
Since $\omega \neq z \tl(z) \tl^2(z)$ for all $z \in D$, $A$ is division \cite{R13}.

Here
$$\Lambda=\mathbb{Z}[\omega,\omega_7+\omega_7^{-1}]\oplus \mathbb{Z}[\omega,\omega_7+\omega_7^{-1}] e\oplus \mathbb{Z}[
\omega,\omega_7+\omega_7^{-1}]e^2\oplus\dots $$
is the natural order in $A$.

Let $\mathcal{I}=\langle 2\rangle$, which is also a prime ideal in $\mathcal{O}_F=\mathbb{Z}[\omega]$ and
this ideal remains prime in $\mathcal{O}_K=\mathbb{Z}[\omega,\omega_7+\omega_7^{-1}]$ and
$\mathbb{Z}[i]/\mathcal{I}\cong\mathbb{F}_4$.
 $\mathcal{I}$ is inert in $K=\mathbb{Q}(\omega,\omega_7+\omega_7^{-1})$.
 We have that
$$\Lambda/\mathcal{I}\Lambda\cong ((\mathbb{F}_{64}/\mathbb{F}_8,\overline{\sigma},-1),\overline{\tau},
\overline{\omega})$$
 is a generalized nonassociative cyclic  algebra employing the split quaternion
 algebra
 $$(\mathbb{F}_{64}/\mathbb{F}_8,\overline{\sigma},-1)\cong {\rm Mat}_2(\mathbb{F}_8)$$ over $\mathbb{F}_8$
 in its construction,
 where $\overline{\omega}\in \mathbb{Z}[\omega,\omega_7+\omega_7^{-1}]\setminus \mathbb{Z}[\omega_7+\omega_7^{-1}]$.
\end{example}

 \section{Some more case studies} \label{sec:inertialII}

Without striving to cover all  cases, we proceed to give some more examples of the nonassociative rings
we obtain as quotients $\Lambda/\mathcal{I}\Lambda$ of $\Lambda$.

\subsection{$A=(K/F,\sigma,d)$ and $\mathcal{I}=\mathfrak{q}^s$}
 Let $A=(K/F,\sigma,d)$ be a nonassociative cyclic division algebra, $c\in\mathcal{O}_K\setminus \mathcal{O}_F$,
 with the natural order $\Lambda$.
 Let $\mathcal{I}=\mathfrak{q}^s$ be a power of a prime ideal $\mathfrak{q}$ in $\mathcal{O}_F$, $s>1$.
 We assume that $\mathfrak{q}$
 is inert in $\mathcal{O}_K$, i.e. $\mathfrak{q}\subset\mathcal{O}_F$ stays prime in $\mathcal{O}_K$, so
 $g=e=1$, $f=m$. Define
$\mathfrak{q}\mathcal{O}_K= \mathcal{Q}$, with $\mathcal{Q}$ a prime ideal in $\mathcal{O}_K$.
 Then
 $$\mathcal{O}_K/\mathfrak{q}^s\mathcal{O}_K\cong \mathcal{O}_K/\mathcal{Q}^s$$
 and
$$\Lambda/\mathcal{I}\Lambda\cong ((\mathcal{O}_K/\mathcal{Q}^s)/(\mathcal{O}_F/\mathfrak{q}^s),\overline{\sigma},\overline{c})$$
with $\overline{c}=c+\mathcal{Q}^s$,
$$\overline{\sigma}(x+\mathcal{Q}^s)=\sigma(x)+\mathcal{Q}^s,$$
is a generalized nonassociative cyclic algebra over $\mathcal{O}_F/\mathfrak{q}^s$.
 Since $d\in \mathcal{O}_K\setminus \mathcal{O}_F$, we know  that $d\not\in \mathfrak{q}$.

\subsection{$A=(D,\sigma,d)$, $\mathcal{I}=\mathfrak{q}^s$ and $c\not\in \mathfrak{q}$}

 Let $A=(D,\sigma,d)$ as in Section \ref{subsec:iterated}, and assume $d\in \mathcal{O}_L$
 invertible (if $d\in \mathcal{O}_F$ invertible a similar argument applies). Let
$D=(K/F,\sigma,c)$, $c\in\mathcal{O}_{F_0}$, and take a natural order $\Lambda$ in $A$.

Let $\mathcal{I}=\mathfrak{q}^s$ be a power of a prime ideal $\mathfrak{q}$ in $\mathcal{O}_{F_0}$, $s>1$.
 We assume that $\mathfrak{q}$
 is inert in $\mathcal{O}_K$, i.e. $\mathfrak{q}$ stays prime in $\mathcal{O}_K$.
  Define
$\mathfrak{q}\mathcal{O}_K= \mathcal{Q}$, $\overline{F}=\mathcal{O}_F/\mathfrak{q}$, with
$\mathcal{Q}$ a prime ideal in $\mathcal{O}_K$
and
 $$\mathcal{O}_K/\mathfrak{q}^s\mathcal{O}_K\cong \mathcal{O}_K/\mathcal{Q}^s.$$
  Then
$$\Lambda/\mathcal{I}\Lambda\cong (\overline{D},\overline{\sigma},d+\mathfrak{q}^{s})=\overline{D}[t;\overline{\sigma}]/
\overline{D}[t;\overline{\sigma}](t^m-\overline{d})$$
with
$$\overline{D}=\mathcal{D}/\mathcal{I}\mathcal{D}=((\mathcal{O}_K/\mathfrak{q}^{s_j}\mathcal{O}_K)/(\mathcal{O}_F/\mathfrak{q}^{s}),\overline{\rho},
c+\mathfrak{q}_t^{s}),$$
$$f=t^m-d+\mathfrak{q}^{s} \in (\mathcal{O}_L/\mathfrak{q}^{s}\mathcal{O}_L)[t;\overline{\sigma}],$$
$m$ the order of $\overline{\sigma}$.

Suppose that  $c\not\in \mathfrak{q}$, then we know by \cite[Proposition 3]{OS} that
$$\mathcal{D}/\mathcal{I}\mathcal{D}\cong{\rm Mat}_n(\mathcal{O}_F/\mathfrak{q}^{s})$$
splits.

\subsection{$A=(K/F,\sigma,d)$, the split case}

 Let $A=(K/F,\sigma,d)$ be a nonassociative cyclic division algebra, $d\in\mathcal{O}_K\setminus \mathcal{O}_F$,
 with the natural order $\Lambda$.

Suppose that $\mathcal{I}=\mathfrak{q}\subset\mathcal{O}_{F}$ is a prime ideal that factors as
$$\mathfrak{q}\mathcal{O}_K=\mathcal{Q}_1\dots\mathcal{Q}_g$$
for some $g>1$. Define the field
$\overline{F}=\mathcal{O}_F/\mathfrak{q}$ and let $\overline{K}=\mathcal{O}_K/\mathfrak{q}\mathcal{O}_K$,  then the Chinese Remainder Theorem tells us that
$$\overline{K}\cong \overline{K}^{(1)}\times\dots\times\overline{K}^{(g)}$$
and consequently this isomorphism induces an automorphism $\overline{\sigma}$ on
$\overline{K}\cong \overline{K}^{(1)}\times\dots\times\overline{K}^{(g)}$. Analogously as in \cite[(15)]{OS} we thus have
an isomorphism, this time  of generalized nonassociative cyclic algebras over the field $\overline{F}$, given by
$$\Lambda/\mathcal{I}\Lambda\cong ((\overline{K}^{(1)}\times\dots
\times\overline{K}^{(g)})/\overline{F},\overline{\sigma},\overline{d}).$$
As proved in \cite[Lemma 5]{OS}, here
$$\overline{K}^{(i)}/\overline{F}$$ is a cyclic field extension with Galois group generated by $\overline{\sigma}^g$,
and
$$\overline{K}^{(i)}\cong \overline{K}^{(1)} \text{ for all }i\in\{2,\dots,m-1\}.$$
Moreover, after suitably reordering the primes $ \mathcal{Q}_1,\dots,\mathcal{Q}_g$ we can assume the
action of $\overline{\sigma}^j$ on $(k,0,\dots,0)\in \overline{K}^{(1)}\times\dots\times\overline{K}^{(g)}$
is given by $(0,\dots,\overline{\sigma}^j(k),\dots,0)$ with $\overline{\sigma}^j(k)$ being in the slot $j+1$, read modulo $g$.

We have $d\not\in\mathfrak{q}$.

\subsection{$A=(D,\sigma,d)$, the split case}

 Let $A=(D,\sigma,d)$ as in Section \ref{subsec:iterated}, and assume $d\in \mathcal{O}_L$
 invertible (if $d\in \mathcal{O}_F$ invertible a similar argument applies). Let
$D=(K/F,\sigma,c)$, $c\in\mathcal{O}_{F_0}$, and take a natural order $\Lambda$ in $A$.

Suppose that $\mathfrak{q}\subset\mathcal{O}_{F}$ is still a prime ideal, but that $\mathcal{I}=\mathfrak{q}\subset\mathcal{O}_{F_0}$ factors as $\mathfrak{q}\mathcal{O}_K=
\mathcal{Q}_1\dots\mathcal{Q}_g$
for some $g>1$.
Define the fields $\overline{F_0}=\mathcal{O}_{F_0}/\mathfrak{q}$ and
$\overline{F}=\mathcal{O}_F/\mathfrak{q}\mathcal{O}_F$.
The Chinese Remainder Theorem tells us that for $\overline{K}=\mathcal{O}_K/\mathfrak{q}\mathcal{O}_K$, we have
$$\overline{K}\cong \overline{K}^{(1)}\times\dots\times\overline{K}^{(g)}$$
and consequently this induces an automorphism $\overline{\sigma}$ on
$\overline{K}\cong \overline{K}^{(1)}\times\dots\times\overline{K}^{(g)}$. By \cite[(15)]{OS} we thus have
an isomorphism
$$\mathcal{D}/\mathcal{I}\mathcal{D}\cong ((\overline{K}^{(1)}\times\dots
\times\overline{K}^{(g)})/(\mathcal{O}_F/\mathcal{I}),\overline{\rho},\overline{c})$$
 of cyclic algebras over the field $\overline{F}$.
As proved in \cite[Lemma 5]{OS},
 after suitably reordering the primes $ \mathcal{Q}_1,\dots,\mathcal{Q}_g$ we can assume that the
action of $\overline{\rho}^j$ on $(k,0,\dots,0)\in \overline{K}^{(1)}\times\dots\times\overline{K}^{(g)}$
is given by $(0,\dots,\overline{\rho}^j(k),\dots,0)$ with $\overline{\rho}^j(k)$ being in the slot $j+1$, read modulo $g$.
There are two cases to consider:
\\
If $\overline{c}\not\in\mathfrak{q}$ then
$$\mathcal{D}/\mathcal{I}\mathcal{D}\cong {\rm Mat}_n(\overline{F})$$
\cite[Proposition 4]{OS} and hence
$$\Lambda/\mathcal{I}\Lambda\cong ({\rm Mat}_n(\overline{F}),\overline{\sigma},\overline{d}).$$
If $\overline{c}\in\mathfrak{q}$ then
$$\mathcal{D}/\mathcal{I}\mathcal{D}\cong
((\overline{K}^{(1)}\times\dots\times\overline{K}^{(g)})/\overline{F},\overline{\rho},\overline{c} )$$
\cite[Proposition 5]{OS} and thus
$$\Lambda/\mathcal{I}\Lambda\cong (\mathcal{D}/\mathcal{I}\mathcal{D},\overline{\sigma},\overline{d}).$$
Note that both times $\overline{d}\not\in\mathfrak{q}$ since we look at algebras which are not associative.

\section{Conclusion and future work} \label{sec:last}

Our approach canonically generalizes and unifies the ones of \cite{DO.0} and  \cite{DO}: the situation considered there
only deals with natural orders in associative cyclic division algebras, i.e. where $f=t^m-d\in \mathcal{O}_K[t;\sigma]$
is irreducible and
$K/F$ is a cyclic number field extension of degree $m$
with Galois group generated by $\sigma$.

We leave it to coding specialists to find well performing codes over the nonassociative finite rings we have presented here,
and to decide to which coding scenarios they can be best applied. We suspect there are applications to wiretap coding, similarly as
outlined in \cite[Section 8]{OS} using the  way to design the wiretap lattice codes presented in \cite{BO}.

It also suggests that future work might
look at the different possible constructions of
linear codes over finite chain rings which arise from nonassociative algebras obtained from a skew polynomial
ring, as it can be seen as yet another generalization of Construction A of lattices from linear codes which are
defined using the quotient $\Lambda/p$ for some suitable prime ideal $p$.


\end{document}